\newtheorem{thm}{Theorem}
\newtheorem{prop}{Proposition}
\newtheorem{pblm}{Problem}
\newtheorem{clm}{Claim}
\newtheorem{fact}{Fact}
\newcommand{\gx}{G^{\times}}
\newcommand{\ch}{{\rm ch}}
\begin{document}

\title{Dynamic list coloring of 1-planar graphs}

\author[1]{Xin Zhang\thanks{Supported by the National Natural Science Foundation of China (11871055) and the Youth Talent Support Plan of Xi'an Association for Science and Technology (2018-6).}\thanks{Corresponding author. Email: xzhang@xidian.edu.cn.}}

\author[1]{Yan Li}

\affil[1]{\small School of Mathematics and Statistics, Xidian University, Xi'an~710071,~China}
\date\today

\maketitle
\begin{abstract}
   
A graph is $k$-planar if it can be drawn in the plane so that each edge is crossed at most $k$ times. Typically, the class of 1-planar graphs is among the most investigated graph families within the so-called ``beyond planar graphs".
A dynamic $\ell$-list coloring of a graph is a proper coloring so that each vertex receives a color from a list of $\ell$ distinct candidate colors assigned to it, and meanwhile, there are at least two colors appearing in the neighborhood of every vertex of degree at least two. In this paper, we prove that each 1-planar graph has a dynamic $11$-list coloring. Moreover, we show a relationship between the dynamic coloring of 1-planar graphs and the proper coloring of 2-planar graphs, which states that the dynamic (list) chromatic number of the class of 1-planar graphs is at least the (list) chromatic number of the class of 2-planar graphs.

\bigskip
\noindent \emph{Keywords: $1$-planar graph; $2$-planar graph; dynamic coloring; proper coloring; list coloring}.

\end{abstract}

\section{Introduction}

Given a simple graph $G$ with vertex set $V(G)$ and edge set $E(G)$, we use $N_G(v)$ to denote the set of neighbors of $v$ in $G$ and say that $d_G(v)=|N_G(v)|$ is the \emph{degree of $v$} in $G$. A \emph{planar graph} is a graph admitting a drawing in the plane with no crossing and typically we say such a drawing a \emph{plane graph}. By $F(G)$ we denote the face set of a plane graph $G$, and for any face $f\in F(G)$, we use $d_G(f)$ to denote the \emph{degree of $f$} in $G$, which is the number of edges that are incident with $f$ in $G$ (cut-edges are counted twice). By $V_G(f)$, we denote the set of vertices  incident with a face $f$ in a plane graph $G$.
A $t$-, $t^+$-, or $t^-$-vertex (resp.\,face) is a vertex (resp.\,face) of degree $t$, at least $t$, or at most $t$, respectively.

A \emph{proper $\ell$-coloring} of a graph $G$ is a coloring on $V(G)$ using $\ell$ colors so that adjacent vertices receive distinct colors. If every vertex of degree at least two is incident with at least two colors, then we call this proper $\ell$-coloring a \emph{dynamic $\ell$-coloring}. The minimum integer $\ell$ 
such that $G$ has a proper (resp.\,dynamic) $\ell$-coloring is  the \emph{chromatic number} (resp.\,\emph{dynamic chromatic number}) of $G$, denoted by $\chi(G)$ (resp.\,$\chi^d(G)$). 

The well-known four color theorem states that $\chi(G)\leq 4$ for every planar graph $G$. In 2013,
Kim, Lee, and Park \cite{Kim20132207} proved that $\chi^d(G)\leq 5$ for every planar graph $G$
and the equality holds if and only if $G\cong C_5$, answering a conjecture of Chen \emph{et al.}\,\cite{Chen20121064}. Furthermore, the same conclusion holds even for  $K_5$-minor-free graphs, which was proved by Kim, Lee and Oum \cite{Kim201681} in 2016. For other results on the dynamic coloring of graphs, we refer the reads to \cite{Ahadi20122579,Alishahi2011152,Borowiecki2012105,Bowler2017151,Chen20121064,Karpov2011601,Lai2003193,Loeb2018129,Meng20063,Montgomery2001,Saqaeeyan2016249,Vlasova201821}.

Imaging that each vertex $v\in V(G)$ is assigned a \emph{list} $L(v)$ of distinct candidate colors,  our goal is to color the vertices of $G$ so that every vertex receives color from its list assignment and the resulting coloring of $G$ is a proper (resp.\,dynamic) coloring. If we win for a given list assignment $L$ to $V(G)$, then $G$ is \emph{$L$-colorable} (resp.\,\emph{dynamically $L$-colorable}). Furthermore, if we win for every given list assignment $L$ to $V(G)$ with $|L(v)|=\ell$ for each $v\in V(G)$, then $G$ is \emph{$\ell$-choosable} (resp.\,\emph{dynamically $\ell$-choosable}). The minimum integer $\ell$ so that $G$ is $\ell$-choosable (resp.\,dynamically $\ell$-choosable) is the \emph{list chromatic number} (resp.\,\emph{dynamic list chromatic number}) of $G$, denoted by $\ch(G)$ (resp.\,$\ch^d(G)$).

Thomassen's theorem \cite{Thomassen1994180} states that $\ch(G)\leq 5$ for every planar graph $G$, and the sharpness of this upper bound $5$ was confirmed by Voigt \cite{Voigt1993215}, who constructed a planar graph $G$ with $\chi(G)=4$ and $\ch(G)=5$. This reminds us that $\chi(G)$ and $\ch(G)$ are not always the same, even for planar graphs. Similarly, Esperet \cite{Esperet20101963} showed that there is a planar bipartite graph $G$ with $\ch(G)=\chi^d(G)=3$ and $\ch^d(G)=4$, and moreover, there exists for every $k\geq 5$ a bipartite graph $G_k$ with $\ch(G_k)=\chi^d(G_k)=3$ and $\ch^d(G_k)\geq k$. Hence the gap between $\chi^d(G)$ (or $\ch(G)$) and $\ch^d(G)$ can be any large. For further interesting readings on the dynamic list coloring of graphs, we refer the readers to \cite{Alishahi2011152,Kim20132207,Kim2011156}.

A \emph{$2$-subdivision} of a graph $G$ is the graph derived from $G$ by inserting on each edge a new vertex of degree two, denoted by $G^{\star}$. One can see Figure \ref{K7} for an example of  $K_7^{\star }$, which is 1-planar.
\begin{figure}
    \centering
    \includegraphics[width=8cm]{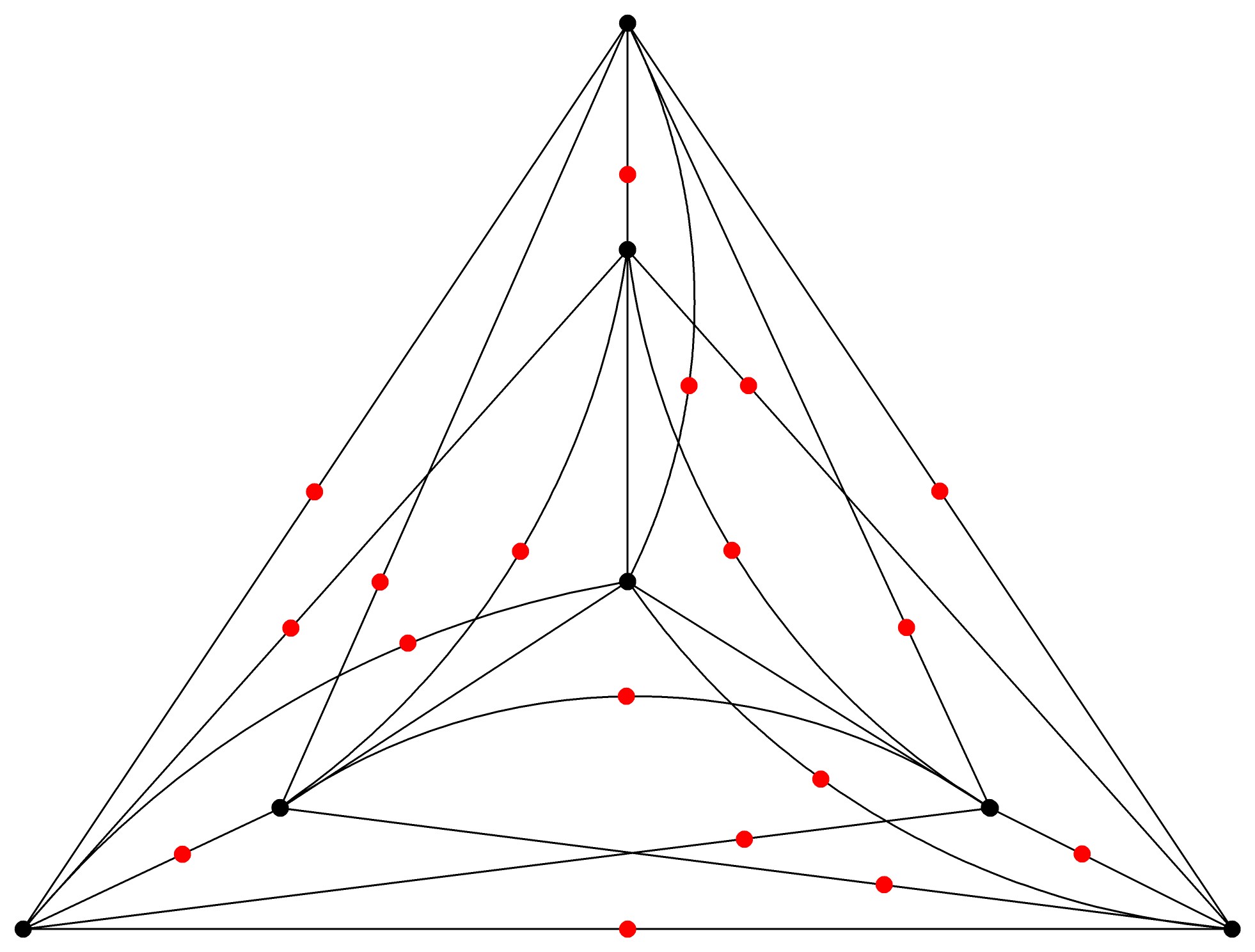}
    \caption{A 1-planar drawing of $K_7^{\star }$}
    \label{K7}
\end{figure}

\begin{fact}\label{fact-1}
For any graph $G$, $\chi(G)\leq \chi^d(G^{\star}) $ and $\ch(G)\leq \ch^d(G^{\star})$.
\end{fact}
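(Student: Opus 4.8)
The plan is to exploit the only structural difference between $G$ and $G^{\star}$: each edge $uv\in E(G)$ is replaced by a path $u\,w_{uv}\,v$ whose middle vertex $w_{uv}$ has degree exactly $2$ in $G^{\star}$. The key observation is that in \emph{any} dynamic coloring of $G^{\star}$ the vertex $w_{uv}$, being of degree at least two, must see at least two colors among its neighbors $u$ and $v$; equivalently, $u$ and $v$ receive distinct colors. Hence the restriction of any dynamic coloring of $G^{\star}$ to $V(G)\subseteq V(G^{\star})$ is already a proper coloring of $G$ using the same colors, and this single remark drives both inequalities.

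For $\chi(G)\leq\chi^d(G^{\star})$ I would set $\ell=\chi^d(G^{\star})$, fix a dynamic $\ell$-coloring $c$ of $G^{\star}$, and check that $c|_{V(G)}$ is a proper $\ell$-coloring of $G$ by the observation above; therefore $\chi(G)\leq\ell$.

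For $\ch(G)\leq\ch^d(G^{\star})$ I would first lift an arbitrary list assignment. Put $\ell=\ch^d(G^{\star})$ and let $L$ be any assignment on $V(G)$ with $|L(v)|=\ell$ for all $v$. Extend $L$ to an assignment $L'$ on $V(G^{\star})$ by setting $L'(v)=L(v)$ for $v\in V(G)$ and giving each subdivision vertex $w_{uv}$ an arbitrary list of size $\ell$. Since $G^{\star}$ is dynamically $\ell$-choosable, it admits a dynamic $L'$-coloring $c$; its restriction to $V(G)$ is, exactly as before, a proper coloring of $G$ in which every vertex $v$ uses a color of $L(v)=L'(v)$. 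As $L$ was arbitrary, $G$ is $\ell$-choosable.

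I do not expect a genuine obstacle here; the one point that deserves care is the direction of the implication. One must pull a coloring of $G^{\star}$ back to $G$ — which discards the colors of the subdivision vertices harmlessly — rather than try to push a coloring of $G$ forward to $G^{\star}$, which would require re-coloring across the new degree-$2$ vertices. The edgeless case, where $G^{\star}=G$, is trivial and can be dispatched in a sentence.
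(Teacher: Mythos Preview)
Your proposal is correct and follows essentially the same route as the paper: extend an arbitrary $\ell$-list on $V(G)$ to one on $V(G^{\star})$, take a dynamic $L^\star$-coloring of $G^{\star}$, and restrict it to $V(G)$, using the fact that each subdivision vertex has degree $2$ and hence forces its two neighbors in $G$ to receive distinct colors. The only cosmetic difference is that the paper handles the list case first and then derives the non-list inequality by specializing every list to $\{1,\ldots,\ell\}$, whereas you treat the two inequalities separately.
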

\begin{proof}
Let $M: V(G) \rightarrow V(G^{\star})$ be a mapping that maps a vertex of $G$ to the vertex of $G^{\star}$ corresponding to it, and let $S\subset V(G^{\star})$ be the set of new added 2-vertices to $G$ while doing the 2-subdivision. Let $L$ be an arbitrary $\ell$-list assigment on $V(G)$, where $\ell=\ch^d(G^{\star})$. 
We extend $L$ to an $\ell$-list $L^\star$ on $V(G^\star)$, i.e, $L^\star(u)=L(M^{-1}(u))$ for any $u\in V(G^{\star})\backslash S$.
Since the two neighbors of a 2-vertex of $G^\star$ shall be colored with distinct colors in any dynamic coloring of $G^\star$, there is a dynamic coloring $c^\star$ of $G^\star$ so that $c^\star(M(u))\in L^\star(M(u))=L(u)$, $c^\star(M(v))\in L^\star(M(v))=L(v)$, and $c^\star(M(u))\neq c^\star(M(v))$ for any $uv\in E(G)$. Therefore, we   construct an $L$-coloring $c$ of $G$ by letting $c(v)=c^\star(M(v))$ for any $v\in V(G)$. This implies that 
$\ch(G)\leq \ell=\ch^d(G^{\star})$. 
The proof for  $\chi(G)\leq \chi^d(G^{\star})$ is similar (we just proceed by fixing every $\ell$-list used in the privious proof to be $\{1,2,\ldots,\ell\}$).
\end{proof}

Note that the equality in Fact \ref{fact-1} does not always hold. One easy example is the cycle $C_n$ on $n$ vertices. Since 
$C_n^\star=C_{2n}$ and it is known \cite{Akbari20093005,Lai2003193,Montgomery2001} that 
\begin{align*}
        \chi^d(C_{2n})=\ch^d(C_{2n})
        &=
        \begin{cases}
        3
        &\text{if } n  \equiv 0 ~({\rm mod~3}),
        \\
        4
        &\text{if } n  \not\equiv 0 ~({\rm mod~3}),
        \end{cases}
   \end{align*}
we have 
\begin{align*}
        \chi^d(C_n^{\star})-\chi(C_n)=\ch^d(C_n^{\star})-\ch(C_n)
        &=
        \begin{cases}
        0
        &\text{if } n  \equiv 3 ~({\rm mod~6}),
        \\
        1
        &\text{if } n  \equiv 0,1,5 ~({\rm mod~6}),
          \\
        2
        &\text{if } n  \equiv 2,4 ~({\rm mod~6}).
        \end{cases}
   \end{align*}

A graph is \emph{$k$-planar} if it can be drawn in the plane so that each edge is crossed at most $k$ times. Specially, the 1-planarity was initially introduced by Ringel \cite{Ringel1965107} in 1965, who proved that $\chi(G)\leq 7$ for every 1-planar graph and conjectured that every 1-planar graph is 6-colorable. This conjecture was solved by Borodin \cite{Borodin1984} in 1984, who also gave a new proof \cite{Borodin1995507} in 1995. Due to the 1-planar graph $K_6$, the upper bound 6 for the chromatic number of the class of 1-planar graphs is sharp. Since 2006, the list coloring of 1-planar graphs was also investigated by many researchers including Albertson and Mohar \cite{Albertson2006289}, Wang and Lih \cite{Wang200827}. In particular, the second group \cite{Wang200827} proved that $\ch(G)\leq 7$ for every 1-planar graph $G$. Actually, the class of 1-planar graphs is among the most investigated graph families within the so-called ``beyond planar graphs", see \cite{Didimo2019}. For those who want to know more about 1-planar graphs, we refer them to a recent survey due to Kobourov, Liotta and Montecchiani \cite{Kobourov201749}.

Let $\mathcal{G}_k$ be the class of graphs that are $k$-planar and non-$(k-1)$-planar. By $\chi(\mathcal{G}_k)$  we denote the minimum integer $\ell$ so that $\chi(G)\leq \ell$  for each $G\in \mathcal{G}_k$. Similarly, we can define $\chi^d(\mathcal{G}_k)$, $\ch(\mathcal{G}_k)$, and $\ch^d(\mathcal{G}_k)$.
 If $G\in \mathcal{G}_{k+1}$ with $k\geq 1$, then it is easy to see that the 2-subdivision of $G$ is $k$-planar. 

Pach and Tóth \cite{Pach1997427} showed that $|E(G)|\leq 5|V(G)|-10$ for each 2-planar graph $G$. This implies that each 2-planar graph $G$ has a vertex of degree at most $9$ and thus $\chi(G)\leq \ch(G)\leq 10$. Since $K_7$ is a non-1-planar 2-planar graph, $7\leq \chi(\mathcal{G}_2)\leq 10$ and $7\leq \ch(\mathcal{G}_2)\leq 10$.

We now look back at Fact \ref{fact-1}. 
If there is a 2-planar graph $G$ with $\chi(G)=\ell$ (resp.\,$\ch(G)=\ell$), then $G^\star$ is a 1-planar graph  with $\chi^d(G^\star)\geq \ell$ (resp.\,$\ch^d(G^\star)\geq \ell$). This implies

\begin{fact}\label{fact-2}
$\chi^d(\mathcal{G}_1)\geq \chi(\mathcal{G}_2)\geq 7$ and $\ch^d(\mathcal{G}_1)\geq \ch(\mathcal{G}_2)\geq 7$.
\end{fact}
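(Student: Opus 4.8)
The plan is to read the statement off from Fact \ref{fact-1}, combined with the observation recorded in the excerpt that the $2$-subdivision of a graph in $\mathcal{G}_{k+1}$ is $k$-planar. I would fix an arbitrary $G\in\mathcal{G}_2$, so that $G$ is $2$-planar but not $1$-planar, and first check that $G^\star\in\mathcal{G}_1$ --- not merely that $G^\star$ is $1$-planar. That $G^\star$ is $1$-planar is the case $k=1$ of the observation just mentioned; that $G^\star$ is in addition non-planar follows from the elementary fact that a graph is planar if and only if each of its subdivisions is planar, so the non-$1$-planarity (hence non-planarity) of $G$ transfers to $G^\star$. Thus $G^\star$ is a $1$-planar, non-planar graph, i.e.\ $G^\star\in\mathcal{G}_1$.

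Next I would apply Fact \ref{fact-1} to $G$, obtaining $\chi(G)\leq\chi^d(G^\star)$ and $\ch(G)\leq\ch^d(G^\star)$. Since $G^\star\in\mathcal{G}_1$, the definitions of $\chi^d(\mathcal{G}_1)$ and $\ch^d(\mathcal{G}_1)$ give $\chi^d(\mathcal{G}_1)\geq\chi^d(G^\star)\geq\chi(G)$ and $\ch^d(\mathcal{G}_1)\geq\ch^d(G^\star)\geq\ch(G)$. Because $G$ is an arbitrary member of $\mathcal{G}_2$, I would then take the supremum over $G\in\mathcal{G}_2$ on the right-hand sides to obtain $\chi^d(\mathcal{G}_1)\geq\chi(\mathcal{G}_2)$ and $\ch^d(\mathcal{G}_1)\geq\ch(\mathcal{G}_2)$. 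Finally, $K_7$ is a non-$1$-planar $2$-planar graph, so $K_7\in\mathcal{G}_2$, whence $\chi(\mathcal{G}_2)\geq\chi(K_7)=7$ and $\ch(\mathcal{G}_2)\geq\ch(K_7)=7$ (the latter two bounds are in fact already noted in the text), completing the two chains of inequalities.

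I do not expect a genuine obstacle: the statement is essentially a bookkeeping corollary of Fact \ref{fact-1}. The only two points deserving a word of care are (i) upgrading ``$G^\star$ is $1$-planar'' to ``$G^\star\in\mathcal{G}_1$'', handled above via the fact that subdivision preserves planarity in both directions; and (ii) the quantifier handling when passing to $\chi(\mathcal{G}_2)$ and $\ch(\mathcal{G}_2)$, which is cleanest if one proves the inequality for each fixed $G\in\mathcal{G}_2$ first and only afterwards passes to the supremum (and, by the Pach--T\'oth bound, these parameters are at most $10$, so the suprema are in fact maxima and one could equally well just choose an extremal $G$).
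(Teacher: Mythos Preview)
Your proposal is correct and follows the same route as the paper: apply Fact~\ref{fact-1} to an arbitrary $G\in\mathcal{G}_2$, use that $G^\star$ is $1$-planar, and invoke $K_7\in\mathcal{G}_2$ for the lower bound~$7$. You are in fact slightly more careful than the paper's own sketch, since you explicitly verify that $G^\star$ is non-planar (via the preservation of planarity under subdivision) and hence lies in $\mathcal{G}_1$ rather than merely in $\mathcal{G}_0\cup\mathcal{G}_1$; the paper glosses over this point.
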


The aim of this paper is to give a reasonable upper bound, say 11, for $\ch^d(\mathcal{G}_1)$ (note that $\chi^d(\mathcal{G}_1)\leq \ch^d(\mathcal{G}_1)$). In other words, we prove the following.

\begin{thm}\label{main-thm}
If $G$ is a 1-planar graph, then $\ch^d(G)\leq 11$.
\end{thm}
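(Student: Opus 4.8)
The natural route is a minimal-counterexample argument combined with a discharging analysis on the planarization of a fixed $1$-planar drawing. Suppose the theorem fails and let $G$ be a counterexample with $|V(G)|+|E(G)|$ minimum; fix an $11$-list assignment $L$ for which $G$ has no dynamic $L$-colouring, and a $1$-planar drawing $D$ of $G$. Let $\gx$ be the plane graph obtained from $D$ by replacing each crossing with a new $4$-vertex, so that $|V(\gx)|=|V(G)|+c$, $|E(\gx)|=|E(G)|+2c$ with $c$ the number of crossings; Euler's formula then gives $\sum_{v\in V(G)}(d_G(v)-4)+\sum_{f\in F(\gx)}(d_{\gx}(f)-4)=-8$, since the $4$-valent crossing-vertices contribute $0$. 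This is the identity the discharging must contradict. Minimality forces $G$ connected; a routine ``delete and extend'' argument removes any $1$-vertex and, by colouring greedily along it, any thread (maximal path of $2$-vertices) of length at least $2$, and since $\ch^d(C_n)\le 5$ settles the case that $G$ is a cycle, we may assume that every vertex of $G$ has degree at least $3$ except for some pairwise non-adjacent $2$-vertices, each of whose two neighbours is non-adjacent to the other and has degree at least $3$.

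\textbf{Reducible configurations.} Each forbidden configuration is certified the same way: delete a vertex $v$, colour $G-v$ dynamically from $L$ (possible by minimality), and extend. Three points drive this. (i) Apart from the exceptional $2$-vertices, every neighbour $u$ of $v$ has $d_{G-v}(u)\ge2$, so the dynamic conditions at the neighbours of $v$ already hold in the colouring of $G-v$ and survive reinsertion of $v$; hence a colour for $v$ is constrained only by properness (at most $d_G(v)$ forbidden colours, plus one per $2$-vertex neighbour) and by the dynamic condition \emph{at $v$}. (ii) That condition is free whenever two neighbours of $v$ are adjacent, and more generally whenever $N_G(v)$ is not monochromatic. (iii) When $N_G(v)$ does come out monochromatic and $v$ has a neighbour $y$ of suitably small degree, one first recolours $y$: a colour in $L(y)$ avoiding the colours on $N_G(y)\setminus\{v\}$ and a bounded list of ``dangerous'' colours (those whose use would break a dynamic condition at a neighbour of $y$) exists provided $d_G(y)$ is below an explicit threshold, after which $N_G(v)$ is no longer monochromatic. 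Calibrating the thresholds to $11$ colours produces an explicit family $\mathcal{C}$ of configurations that $G$ cannot contain: a $10^-$-vertex with two adjacent neighbours; a small-degree vertex whose neighbourhood is independent and consists of small-degree vertices; a $2$-vertex with a small-degree neighbour; and the analogues in which ``adjacency'' is read through a $2$-path across a crossing-vertex of $\gx$.

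\textbf{Discharging and main obstacle.} Assign to each $v\in V(G)$ the charge $d_G(v)-4$ and to each face $f\in F(\gx)$ the charge $d_{\gx}(f)-4$, with total $-8$. After, where helpful, normalising $D$ so that the four endpoints of each crossing are distinct and no two crossing edges share a vertex, one redistributes charge from the objects of positive charge ($5^+$-faces and $5^+$-vertices) to those of negative charge ($3$-faces, $3$-vertices, $2$-vertices), the transfers near a crossing being routed through the crossing-vertex and dictated by the configuration of faces and edges surrounding it; in the absence of every configuration of $\mathcal{C}$ each object ends non-negative, contradicting the total $-8$, which proves the theorem. I expect the difficulty to lie in two places. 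First, the recolouring of step (iii): organising it so that recolouring a neighbour never cascades is exactly what forces a palette larger than the $7$ colours needed for proper list colouring of $1$-planar graphs, and is essentially why the bound is $11$. Second, and more laborious, the discharging over a $1$-planar drawing is considerably more delicate than in the planar case: one must enumerate the small faces and small vertices that can occur around a crossing and verify that $\mathcal{C}$ is rich enough to exclude every local pattern that would otherwise leave negative charge, while staying sparse enough that every member of $\mathcal{C}$ is genuinely reducible. Balancing $\mathcal{C}$ against the discharging rules---rich enough to close the count, sparse enough to prove---is the real crux.
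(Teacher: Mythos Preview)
Your outline follows the paper's strategy (minimal counterexample, planarization $\gx$, $d-4$ discharging), but the reducibility machinery you describe has a genuine gap. The paper's pivotal structural lemma is that in a minimal counterexample \emph{every} edge has an endpoint of degree $\ge 11$. Your family $\mathcal{C}$ does not contain this, and your step~(iii) will not prove it: if you delete a single vertex $u$ and attempt to recolour a neighbour $y$ with $d_G(y)\le 10$, the forbidden set for $y$ consists of up to $d_G(y)-1$ colours for properness \emph{plus} up to $d_G(y)-1$ further ``dangerous'' colours (one for each neighbour $z$ of $y$ whose other neighbours might be monochromatic), and these sets need not overlap; once $d_G(y)\ge 7$ the count exceeds $11$. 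The paper avoids recolouring altogether by deleting \emph{both} endpoints of the offending edge, together with all $2$-vertex satellites of the larger endpoint, and then colouring them back in a carefully chosen order. Without the every-edge-meets-an-$11^+$-vertex lemma the discharging cannot close: it is what guarantees, for instance, that around any $8^-$-vertex on a $5^+$-face the adjacent crossing edges have $11^+$-vertices on their far ends, which is exactly what feeds charge back.

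There is a second missing idea for the crossing configurations. For a true vertex $u$ of small degree on a false $3$-face (or a $4^+$-face with a true face-neighbour), the paper's reduction is not ``delete and extend'' but ``delete $u$ and \emph{add} an edge $vw$ across the region vacated by $u$'', where $v,w$ are suitably chosen vertices near the crossing. The added edge forces $c(v)\ne c(w)$ in the dynamic colouring of the smaller graph, which pre-arranges the dynamic condition at $u$, and the freed crossing supplies the room to draw $vw$ while staying $1$-planar. Your phrase ``adjacency read through a $2$-path across a crossing-vertex'' gestures at this but does not capture the add-an-edge mechanism, which is what actually makes those local patterns reducible. Finally, one small omission: the paper also fixes a drawing with the minimum number of crossings, which is needed to rule out a $6$-face of $\gx$ incident with three special $2$-vertices.
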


\section{Dynamically Minimal Graphs}

A graph class $\mathcal{F}$ is \emph{hereditary} if $\mathcal{F}$ is closed by taking subgraphs.
A graph $G$ is \emph{dynamically $\ell$-minimal in a hereditary class $\mathcal{F}$} if $G\in \mathcal{F}$ is not dynamically $\ell$-choosable and any graph $H\in \mathcal{F}$ with $|V(H)|+|E(H)|<|V(G)|+|E(G)|$ is dynamically $\ell$-choosable.

In this section, we use $\mathcal{G}_{1}^-$ to stand the class of 1-planar graphs, i.e., $\mathcal{G}_{1}^-=\mathcal{G}_0\cup \mathcal{G}_1$.
Suppose that $G$ is a dynamically $\ell$-minimal graph in $\mathcal{G}_{1}^-$, 
It follows that $G$ is a 1-planar graph with the smallest value of $|V(G)|+|E(G)|$ such that
there is an $\ell$-list assignment $L$ to the vertices of $G$ such that $G$ is not dynamically $L$-colorable. Moreover, we assume that $G$ is a \emph{$1$-plane graph} (i.e, a drawing of $G$ in the plane so that its 1-planarity is satisfied) that has the minimum number of crossings. 

The \emph{associated plane graph} $\gx$ of a 1-plane $G$ is the plane graph derived from $G$ by turning all crossings into new vertices of degree 4, and those 4-vertices in $\gx$ are called \emph{false vertices}. If a vertex of $\gx$ is not false, then it is a \emph{true vertex}. A face of the plane graph $\gx$ is a \emph{false face} if it is incident with at least one false vertex, and is a \emph{true face} otherwise. Clearly, no two false vertices are adjacent in $\gx$ by the definition of the 1-planarity and  each face $f$ of $\gx$ is incident with at most $d_{\gx}(f)/2$ false vertices.

In the following statements or the proofs of the propositions, $\mathcal{F}$ stands for an arbitrary given hereditary graph class, and $L$ is the $\ell$-list assignment mentioned above.

\begin{prop} \label{min-deg}
If $G$ is a dynamically $\ell$-minimal graph in $\mathcal{F}$ with $\ell\geq 3$, then $\delta(G)\geq 2$.
\end{prop}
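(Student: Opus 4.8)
The plan is to argue by contradiction, exploiting the minimality of $G$. Suppose $G$ has a vertex $v$ with $d_G(v)\leq 1$. Since $\mathcal{F}$ is hereditary we have $G-v\in\mathcal{F}$, and clearly $|V(G-v)|+|E(G-v)|<|V(G)|+|E(G)|$, so $G-v$ is dynamically $\ell$-choosable. In particular $G-v$ admits a dynamic $L'$-coloring $c$, where $L'$ is the restriction of $L$ to $V(G-v)$, an $\ell$-list assignment. I will show that $c$ extends to a dynamic $L$-coloring of $G$, contradicting the fact that $G$ is not dynamically $L$-colorable.

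If $d_G(v)=0$, the extension is immediate: colour $v$ with any colour of $L(v)$. No adjacency is created, $v$ has degree $<2$ so no dynamic condition applies to it, and the degrees and neighbourhoods of all other vertices are unchanged, so $c$ remains a dynamic $L$-colouring of $G$.

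Now suppose $d_G(v)=1$, with unique neighbour $u$. To extend $c$ to $v$ we need $c(v)\in L(v)$ with $c(v)\neq c(u)$ (properness of the new edge $uv$), and we must check that the dynamic condition at $u$ still holds in $G$. Here is the only point that requires care: if $d_G(u)\geq 3$ then $d_{G-v}(u)\geq 2$, so $c$ already gives $u$ at least two colours among $N_{G-v}(u)\subseteq N_G(u)$ and this cannot be destroyed by adding $v$; if $d_G(u)=1$ then $u$ has degree $1$ in $G$ and no dynamic condition applies to it; and if $d_G(u)=2$, say $N_G(u)=\{v,w\}$, then we additionally require $c(v)\neq c(w)$ so that $u$ sees two colours (note $c(u)\neq c(w)$ already since $uw\in E(G-v)$). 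In all cases at most two colours are forbidden at $v$, namely $c(u)$ and possibly $c(w)$, and since $|L(v)|=\ell\geq 3$ we may choose $c(v)\in L(v)\setminus\{c(u),c(w)\}$. As no other vertex changes its neighbourhood or degree, the resulting colouring is a dynamic $L$-colouring of $G$, the desired contradiction.

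There is no serious obstacle in this argument; the only subtlety — and the reason the hypothesis reads $\ell\geq 3$ rather than $\ell\geq 2$ — is the bookkeeping for the dynamic constraint at the neighbour $u$ when $d_G(u)=2$, which contributes the second forbidden colour for $v$.
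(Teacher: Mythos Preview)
Your proof is correct and follows essentially the same approach as the paper's: delete the low-degree vertex, use minimality to obtain a dynamic $L$-coloring of the smaller graph, and extend by avoiding at most two colours. Your version is in fact slightly more careful than the paper's, which tacitly assumes the minimum-degree vertex has degree exactly~$1$ and that its neighbour has a further neighbour; you handle the isolated-vertex case and the case $d_G(u)=1$ explicitly.
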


\begin{proof}
Suppose, to the contrary, that $G$ has an edge $uv$ with $d_G(u)=1$. By the minimality of $G$, $G'=G-u\in \mathcal{F}$ is dynamically $L$-colorable. Let $c$ be a dynamic $L$-coloring of $G'$. Coloring $u$ from $L(u)$ with a color different from the colors on $v$ and a neighbor of $v$ besides $u$, we obtain a dynamic $L$-coloring of $G$, a contradiction.
\end{proof}

\begin{prop} \label{no-two-two}
If $G$ is a dynamically $\ell$-minimal graph in $\mathcal{F}$ with  $\ell\geq 5$, then no two $2$-vertices are adjacent in $G$.
\end{prop}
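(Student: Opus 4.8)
The plan is the standard minimal-counterexample reduction. Suppose for contradiction that $G$ contains adjacent $2$-vertices $u$ and $v$, say with $N_G(u)=\{v,x\}$ and $N_G(v)=\{u,y\}$ (where possibly $x=y$). By Proposition \ref{min-deg} we already know $\delta(G)\geq 2$, so this is the first ``bad'' local configuration to rule out. First I would pass to the smaller graph $G'=G-\{u,v\}$ (or, to keep the neighbours' degree bookkeeping cleaner, $G'=G-u-v$ with the edge $xy$ possibly added if needed — but I expect deletion alone suffices here). Since $\mathcal{F}$ is hereditary, $G'\in\mathcal{F}$, and $|V(G')|+|E(G')|<|V(G)|+|E(G)|$, so by minimality $G'$ is dynamically $L$-colorable; fix such a coloring $c$.

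The core of the argument is to extend $c$ to $u$ and $v$ using only the lists $L(u),L(v)$ of size $\ell\geq 5$, while simultaneously (i) keeping the coloring proper, (ii) making $u$ and $v$ themselves dynamic (each has degree $2$, so each needs its two neighbours to get distinct colors), and (iii) repairing the dynamic condition at $x$ and at $y$, whose neighbourhoods lost a vertex. For (ii): $u$ is dynamic iff $c(v)\neq c(x)$, and $v$ is dynamic iff $c(u)\neq c(y)$. So I would first choose $c(v)\in L(v)$ avoiding $c(x)$, $c(y)$, and the colors on $N_{G'}(y)$ if $y$ needs help becoming dynamic — at most $2 + d_{G'}(y)$ constraints, but most of these collapse; the honest count is: $c(v)$ must avoid $c(u)$'s eventual value (handled by choosing $c(u)$ second), $c(y)$ (properness at $v$), $c(x)$ (dynamic at $u$), and one color to restore dynamicity at $y$ if $y$ had become monochromatic in its remaining neighbourhood. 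That is at most $3$ forbidden colors, leaving $\ell-3\geq 2$ choices. Then choose $c(u)\in L(u)$ avoiding $c(x)$ (properness at $u$), $c(v)$ (dynamic at $v$ and properness on edge $uv$), and one color to restore dynamicity at $x$ — again at most $3$ forbidden colors out of $\ell\geq 5$, so a valid choice exists. The case $x=y$ is easier and should be checked separately in one line, since then only one outer vertex needs repair.

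The step I expect to be the main obstacle — really the only place requiring care — is the bookkeeping for the dynamic condition at $x$ and $y$: one must verify that ``$x$ needs a second color in its neighbourhood'' costs at most one extra forbidden color for the relevant choice among $c(u),c(v)$, and that this does not conflict with the properness/dynamicity demands already imposed, in every sub-case ($x=y$ or not; $x$ adjacent to $y$ or not; $d_{G}(x)$ or $d_G(y)$ equal to $2$ versus larger). In particular if $d_G(x)=2$ then $x$'s only neighbours are $u$ and another vertex, so coloring $u$ already determines whether $x$ is dynamic, and one checks the single forbidden color suffices; the symmetric remark applies to $y$. Assembling these cases gives a dynamic $L$-coloring of $G$, contradicting the choice of $G$, and completes the proof.
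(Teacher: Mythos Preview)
Your approach is essentially the paper's: delete $u$ and $v$, take a dynamic $L$-coloring $c$ of $G-\{u,v\}$ by minimality, and extend greedily. The paper fixes $x_1\in N_G(x)\setminus\{u\}$ and $y_1\in N_G(y)\setminus\{v\}$, colors $u$ from $L(u)\setminus\{c(x),c(y),c(x_1)\}$ and then $v$ from $L(v)\setminus\{c(u),c(x),c(y),c(y_1)\}$; the second forbidden set has four elements, which is precisely why the hypothesis $\ell\geq 5$ is required.

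There is one slip in your bookkeeping. You correctly record early on that ``$v$ is dynamic iff $c(u)\neq c(y)$'', but when you list the forbidden colors for $c(u)$ you write ``$c(v)$ (dynamic at $v$ and properness on edge $uv$)'' and omit $c(y)$ entirely. Avoiding $c(v)$ gives properness on $uv$, but it does nothing for the dynamic condition at $v$, whose neighbours are $u$ and $y$; you must also forbid $c(y)$. Thus the forbidden set for $c(u)$ in your ordering is $\{c(x),\,c(v),\,c(y),\,\text{one color to repair }x\}$, of size at most $4$, not $3$. Since $\ell\geq 5$ a valid color still exists, and with this correction your argument goes through and coincides with the paper's.
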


\begin{proof}
Suppose, to the contrary, that $G$ has an edge $uv$ with $d_G(u)=d_G(v)=2$. Let $N_G(u)=\{v,x\}$, $N_G(v)=\{u,y\}$, $x_1\in N_G(x)\backslash \{u\}$, and $y_1\in N_G(y)\backslash \{v\}$. By the minimality of $G$, $G'=G-\{u,v\}\in \mathcal{F}$ has a dynamic $L$-coloring $c$. Coloring $u$ with $c(u)\in L(u)\backslash \{c(x),c(y),c(x_1)\}$ and $v$ with $c(v)\in L(v)\backslash \{c(u),c(x),c(y),c(y_1)\}$, we get a dynamic $L$-coloring of $G$, a contradiction.
\end{proof}

Actually, Proposition \ref{no-two-two} can be generalized to the following.

\begin{prop} \label{edge-with-big-vertex}
If $G$ is a dynamically $\ell$-minimal graph in $\mathcal{F}$ with $\ell\geq 5$, then each edge of $G$ is incident with at least one $\ell^+$-vertex.
\end{prop}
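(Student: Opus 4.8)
The plan is to mimic the proof of Proposition~\ref{no-two-two}, but argue by contradiction starting from an edge $uv$ in which both endpoints have degree at most $\ell-1$. First I would set $d_G(u)=s$ and $d_G(v)=t$ with $s,t\le \ell-1$, and note that by Proposition~\ref{min-deg} we may assume $s,t\ge 2$. Write $N_G(u)=\{v,u_1,\dots,u_{s-1}\}$ and $N_G(v)=\{u,v_1,\dots,v_{t-1}\}$. To handle the dynamic condition at $u$ and at $v$ after recoloring, I also fix a ``second neighbor'': pick some $u^\ast\in N_G(u_1)\setminus\{u\}$ (which exists since $\delta(G)\ge 2$), and similarly some $v^\ast\in N_G(v_1)\setminus\{v\}$. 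By minimality of $G$, the graph $G'=G-\{u,v\}\in\mathcal{F}$ (this uses that $\mathcal{F}$ is hereditary) has a dynamic $L$-coloring $c$.

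Next I would extend $c$ to $u$ and $v$. The constraints on $u$ are: $c(u)$ must avoid $c(u_1),\dots,c(u_{s-1})$ (properness) and, to guarantee the dynamic condition at $u$ when $s\ge 2$, we want the neighborhood of $u$ to see at least two colors --- but actually the cleanest bookkeeping is to have $u$ differ from one particular neighbor, say $u_1$, and force the dynamic condition at $u_1$ separately. Let me instead proceed as in Proposition~\ref{no-two-two}: color $u$ with a color in $L(u)\setminus\{c(u_1),\dots,c(u_{s-1}),c(u^\ast)\}$; this forbids at most $(s-1)+1=s\le \ell-1$ colors, so a valid choice remains. Then color $v$ with a color in $L(v)\setminus\{c(u),c(v_1),\dots,c(v_{t-1}),c(v^\ast)\}$; this forbids at most $1+(t-1)+1=t+1$ colors. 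When $t\le \ell-2$ this is at most $\ell-1$ and we are done immediately; the borderline case $t=\ell-1$ needs a little more care, and is where I expect the only real subtlety.

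For the borderline case I would observe that we have freedom in the earlier choice for $u$: if $u^\ast$ can be chosen so that $c(u^\ast)=c(u_1)$ already (no new forbidden color), or more generally if the set of colors forbidden for $u$ has size strictly less than $\ell-1$, then $u$ has at least two admissible colors, and since the list of colors forbidden for $v$ depends on $c(u)$ only through the single value $c(u)$, at least one of the two choices of $c(u)$ avoids any prescribed single ``bad'' color, reducing the count for $v$ by one. Alternatively --- and this is the standard trick --- one colors $v$ \emph{first} if $t=\ell-1>s$, swapping the roles, so that the endpoint with the larger degree is colored when fewer constraints are present. Since at least one of $s,t$ is at most $\ell-1$ and we only need one of the two to be handled ``with slack'', a short case analysis on which of $s\le \ell-2$, $t\le \ell-2$ holds closes the argument; if both equal $\ell-1$ the symmetric version of the $t=\ell-1$ analysis applies to both. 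The main obstacle is thus purely this endgame counting: making sure that across all the cases $s,t\in\{2,\dots,\ell-1\}$ one endpoint can always be colored leaving a genuine choice for the other, and that the auxiliary second-neighbor vertices $u^\ast,v^\ast$ needed for the dynamic condition are available and do not collide with the vertices already removed. None of this requires more than counting forbidden colors, so I would not expect to ``grind'' beyond a compact case split.
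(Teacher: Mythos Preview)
Your sketch has two real gaps, both concerning the dynamic condition. First, you never secure the dynamic condition \emph{at $u$ itself}: this requires that $\{c(v),c(u_1),\dots,c(u_{s-1})\}$ contain at least two colors, and if $c(u_1)=\cdots=c(u_{s-1})$ you must force $c(v)\neq c(u_1)$. Your forbidden set for $v$ does not include any $c(u_i)$, so the extension can fail to be dynamic at $u$; your text conflates the condition at $u$ with the condition at $u_1$ (the auxiliary vertex $u^\ast$ serves only the latter). The symmetric issue arises for the dynamic condition at $v$. Second, a single $u^\ast\in N_G(u_1)\setminus\{u\}$ is not enough: any $u_i$ with $d_G(u_i)=2$ drops to degree $\le 1$ in $G'=G-\{u,v\}$, and restoring $u$ then requires $c(u)\neq c(u_i')$ for its other neighbor $u_i'$. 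Several such $u_i$ may occur simultaneously, each adding a forbidden color, so your bound $|F(u)|\le s$ collapses.

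The paper's proof avoids both problems by a preliminary step: it first shows, by deleting $u$, $v$, and \emph{all} $2$-neighbors $x_1,\dots,x_t$ of $v$ and recoloring them in order, that a $2$-vertex can only be adjacent to $\ell^+$-vertices. This lets the main case assume $a,b\ge 3$ and, crucially, $d_G(u_i),d_G(v_j)\ge 3$, which eliminates the second gap outright since each $u_i,v_j$ retains degree $\ge 2$ in $G'$. For the first gap the paper adds $c(u_1)$ to the forbidden set for $v$ (giving $|F(v)|=b\le \ell-1$) and then splits on whether the $c(v_j)$ are all equal: if not, the dynamic condition at $v$ is automatic and one colors $v$ first; if so, one colors $u$ first forbidding $c(v_1)$, and then only $\{c(v_1),c(u),c(u_1)\}$ needs to be forbidden for $v$. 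Your endgame idea of swapping which vertex is colored first is in the right spirit, but the dichotomy that actually works is on the colors already present at the $v_j$, not on the degrees $s,t$.
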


\begin{proof}
We first claim that if $uv\in E(G)$ and $d_G(u)=2$, then $d_G(v)\geq \ell$.
Suppose, to the contrary, that $d_G(v)\leq \ell-1$. Let $N_G(u)=\{v,z\}$ and  $N_G(v)=\{u,x_1,\ldots, x_{t},y_1,\ldots,y_s\}$, where $d_G(x_i)=2$ for each $1\leq i\leq t$ and $d_G(y_i)\geq 3$ for each $1\leq i\leq s$. Let $N_G(x_i)=\{v,x'_i\}$ for each $1\leq i\leq t$. Note that $t$ or $s$ may be 0, in which case  $N_G(v)=\{u,y_1,\ldots,y_s\}$ or $N_G(v)=\{u,x_1,\ldots,x_t\}$, respectively. 
By Proposition \ref{no-two-two}, $t+s\geq 2$, 
$d_G(z)\geq 3$, and $d_G(x'_i)\geq 3$ for each $1\leq i\leq t$. 
By the minimality of $G$, $G'=G-\{u,v,x_1,\ldots,x_t\}\in \mathcal{F}$ has a dynamic $L$-coloring $c$. 
Color $v,x_1,\ldots,x_t,u$ in this order with colors
$c(v),c(x_1),\ldots,c(x_t),c(u)$ such that
$c(v)\in L(v)\backslash F(v)$, where $F(v)=\{c(z),c(x'_1),\ldots,c(x'_t),c(y_1),\ldots,c(y_s)\}$,
$c(x_i)\in L(x_i)\backslash \{c(v),c(x'_i)\}$ for each $1\leq i\leq t$, and
$c(u)\in L(u)\backslash \{c(z),c(v),c(x_1)\}$ if $t\neq 0$, or
$c(u)\in L(u)\backslash \{c(z),c(v),c(y_1)\}$ if $t=0$.
Note that $|F(v)|=1+t+s=d_G(v)\leq \ell-1$. It is easy to see that this results in a dynamic $L$-coloring of $G$, a contradiction. 

We come back to the proof of Proposition \ref{edge-with-big-vertex}. Suppose, to the contrary, that $G$ has an edge $uv$ with $d_G(u)=a\leq d_G(v)=b\leq \ell-1$. Let $N_G(u)=\{v,u_1,\ldots, u_{a-1}\}$ and $N_G(v)=\{u,v_1,\ldots, v_{b-1}\}$. By the arguments in the first paragraph, $a,b\geq 3$ and $d_G(u_i),d_G(v_j)\geq 3$ for each $1\leq i\leq a-1$ and $1\leq j\leq b-1$. 
By the minimality of $G$, $G'=G-\{u,v\}\in \mathcal{F}$ has a dynamic $L$-coloring $c$. Without loss of generality, assume that $c(v_1)\leq c(v_2)\leq \cdots \leq c(v_{b-1})$.

If $c(v_1)\neq c(v_{b-1})$, then we construct a dynamic $L$-coloring of $G$ by coloring $v$ and $u$ in order with $c(v)$ and $c(u)$ such that 
$c(v)\in L(v)\backslash F(v)$ and $c(u)\in L(u)\backslash F(u)$, where $F(v)=\{c(v_1),\ldots,c(v_{b-1}),c(u_1)\}$ and $F(u)=\{c(u_1),\ldots,c(u_{a-1}),c(v)\}$. 

If  $c(v_1)= c(v_{b-1})$, then we construct a dynamic $L$-coloring of $G$ by coloring $u$ and $v$ in order with $c(u)$ and $c(v)$ such that 
$c(u)\in L(u)\backslash F(u)$ and $c(v)\in L(v)\backslash F(v)$, where  $F(u)=\{c(u_1),\ldots,c(u_{a-1}),c(v_1)\}$ and $F(v)=\{c(v_1),c(u),c(u_1)\}$.

In each of the above two cases we win since $|F(u)|=a\leq \ell-1$ and $|F(v)|\leq b\leq \ell-1$. So we have contradictions.
\end{proof}

\begin{prop}\label{true-3-face}
If $G$ is a dynamically $\ell$-minimal graph in $\mathcal{F}$ with $\ell\geq 5$ and $u$ is a vertex incident with a triangle,
then $d_G(u)\geq \ell$.
\end{prop}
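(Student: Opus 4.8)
The plan is to argue by contradiction. Suppose $u$ lies on a triangle $uvw$ (so $uv,vw,wu\in E(G)$) but $d_G(u)\le \ell-1$. The single structural fact I would extract first is this: since $uu'\in E(G)$ for each $u'\in N_G(u)$ and $u$ is not an $\ell^+$-vertex, Proposition \ref{edge-with-big-vertex} forces every neighbour of $u$ to be an $\ell^+$-vertex; in particular $d_G(u')\ge \ell\ge 3$ for all $u'\in N_G(u)$. (Also $d_G(u)\ge 2$ by Proposition \ref{min-deg}, though this is immediate from the triangle.)

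Next I would delete $u$. By minimality $G'=G-u\in\mathcal F$ has a dynamic $L$-colouring $c$. I then reinsert $u$ and pick $c(u)\in L(u)$ avoiding the colours on the at most $d_G(u)\le \ell-1<\ell=|L(u)|$ neighbours of $u$; this keeps the colouring proper on all of $G$.

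Finally I would check that the resulting colouring of $G$ is dynamic. A vertex of $G$ not adjacent to $u$ retains its entire coloured neighbourhood from $G'$, so its condition is inherited. For a neighbour $u'\in N_G(u)$ with (necessarily) $d_G(u')\ge 3$, we have $d_{G'}(u')=d_G(u')-1\ge 2$, so $u'$ already saw two distinct colours in $G'$, and adding the extra coloured neighbour $u$ cannot undo this. As for $u$ itself: it has degree $\ge 2$, and $v,w\in N_G(u)$ with $c(v)\ne c(w)$ because $vw\in E(G)$ and $c$ is proper, so $u$ sees two colours in its neighbourhood. Hence $G$ is dynamically $L$-colourable, contradicting the choice of $G$.

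I do not expect a genuine obstacle: the proof is short once Proposition \ref{edge-with-big-vertex} is invoked. The only point that requires care is the verification of the dynamic condition after reinserting $u$ — specifically, the potentially dangerous neighbours are $2$-vertices adjacent to $u$ (whose unique remaining neighbour in $G'$ could clash when $u$ is added back), and these are exactly what Proposition \ref{edge-with-big-vertex} rules out; the triangle edge $vw$ then supplies the two colours needed in $N_G(u)$ for free.
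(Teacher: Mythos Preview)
Your proof is correct and follows essentially the same approach as the paper: delete $u$, use Proposition~\ref{edge-with-big-vertex} to guarantee every neighbour of $u$ has degree at least $\ell$, extend the dynamic $L$-colouring of $G-u$ by choosing $c(u)$ off the at most $\ell-1$ neighbour colours, and use the triangle edge $vw$ to supply the two colours in $N_G(u)$. Your write-up is in fact more explicit than the paper's in verifying the dynamic condition at the neighbours of $u$ after reinsertion.
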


\begin{proof}
Suppose, to the contrary, that $f=uvw$ is a triangle such that $d_G(u)\leq \ell-1$. By the minimality of $G$, $G'=G-u
\in \mathcal{F}$ has a dynamic $L$-coloring $c$. By Proposition \ref{edge-with-big-vertex}, $d_G(v),d_G(w)\geq \ell$.
Let $N_G(u)=\{v,w,x_1,\ldots, x_{t}\}$. Since $d_G(u)\leq k-1$, $d_G(x_i)\geq \ell$ for each $1\leq i\leq t$ by Proposition \ref{no-two-two}. Extending $c$ to a dynamic $L$-coloring of $G$ by coloring $u$ with a color $c(u)\in L(u)\backslash F(u)$, where $F(u)=\{c(v),c(w),c(x_1),\ldots, c(x_t)\}$, we find a contradiction. Note that $|F(u)|=t+2=d_G(u)\leq \ell-1$.
\end{proof}

\begin{prop}\label{false-3-face}
If $G$ is a dynamically $\ell$-minimal graph in $\mathcal{G}_{1}^-$ with $\ell\geq 5$ and $u$ is a vertex incident with a false $3$-face of $\gx$, then either $u$ is false or $d_G(u)\geq \ell-2$.
\end{prop}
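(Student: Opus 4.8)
The plan is to argue by contradiction: suppose $u$ is a true vertex lying on a false $3$-face of $\gx$ and $d_G(u)\leq\ell-3$. First I would unpack the hypothesis. A false $3$-face carries exactly one false vertex (no two false vertices are adjacent, and a $3$-face admits at most one), so the face in question is $uvc$ with $c$ false and $v$ true; since $uv$ joins two true vertices it is an uncrossed edge of $G$. Moreover $c$ is the crossing of two edges $uu'$ and $vv'$ of $G$ with $u,v,u',v'$ pairwise distinct: in the fixed crossing-minimal drawing no two edges sharing an endpoint cross, and neither $uu'$ nor $vv'$ can coincide with the uncrossed edge $uv$. By Proposition~\ref{edge-with-big-vertex}, since $d_G(u)<\ell$, every neighbour of $u$ is an $\ell^+$-vertex; in particular $d_G(v),d_G(u')\geq\ell$. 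Since $G-u$ is again $1$-planar and smaller, the minimality of $G$ provides a dynamic $L$-colouring $\phi$ of $G-u$.

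The straightforward half of the argument is to put $u$ back. Give $u$ a colour of $L(u)$ avoiding the at most $d_G(u)\leq\ell-3$ colours appearing on $N_G(u)$, so at least three colours are available. Re-inserting $u$ keeps the colouring proper; it cannot break the dynamic condition at a neighbour $w$ of $u$, because $w$ has $d_G(w)-1\geq\ell-1\geq2$ neighbours in $G-u$, its dynamic condition already held there, and enlarging its neighbourhood can only add colours; and the dynamic condition at $u$ holds as soon as $\phi$ assigns at least two colours to $N_G(u)$. So if $\phi$ is non-constant on $N_G(u)$ we reach a contradiction, and we are done.

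Hence the real case is $\phi(N_G(u))=\{\alpha\}$ for a single colour $\alpha$, and then $N_G(u)$ is an independent set (a monochromatic set in a proper colouring), so in particular $u'\not\sim v$. Here one must first modify $\phi$ on $G-u$ so that $N_G(u)$ acquires a second colour, and then finish as above by colouring $u$ from $L(u)\setminus\{\alpha,\beta\}$ (at least $\ell-2\geq3$ colours), where $\beta$ is the new colour. I would do the modification by recolouring one neighbour of $u$ --- say $v$ --- with some $\beta\neq\alpha$, keeping the result a dynamic $L$-colouring of $G-u$: a colour is inadmissible for $v$ only if it is $\alpha$, or it already occurs on $N_{G-u}(v)$, or it equals the common colour $\mu_z\neq\alpha$ of some ``fragile'' neighbour $z$ of $v$, meaning a neighbour of $v$ whose dynamic condition in $G-u$ currently rests solely on the colour $\alpha$ contributed by $v$ (all other neighbours of $z$ sharing the colour $\mu_z$). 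I expect the main obstacle to be guaranteeing that $L(v)$ --- or, failing that, $L(u')$, or a short recolouring chain through $v'$ --- still contains an admissible colour: one has to bound the colours of $N_{G-u}(v)$ together with the $\mu_z$'s simultaneously, and this is exactly where the local structure at the crossing $c$ (which of $u,v,u',v'$ and their neighbours coincide or are adjacent) and the extra slack $\ell-3$ rather than $\ell-1$ in the degree bound must be exploited. Once such a $\beta$ is produced, colouring $u$ as above yields a dynamic $L$-colouring of $G$, contradicting the choice of $G$ and $L$.
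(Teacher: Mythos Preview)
Your easy half is correct: when $\phi$ already shows two colours on $N_G(u)$, colouring $u$ from $L(u)\setminus\phi(N_G(u))$ works, and your check of the dynamic condition at each neighbour (using $d_G(w)\ge\ell$ from Proposition~\ref{edge-with-big-vertex}) is sound.

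The genuine gap is in the monochromatic case. You propose to recolour a neighbour of $u$, say $v$, within the dynamic $L$-colouring of $G-u$, but you do not carry this out, and the outline you give does not obviously succeed. The vertex $v$ has $d_G(v)\ge\ell$, so $|N_{G-u}(v)|\ge\ell-1$; together with $\alpha$ and the colours $\mu_z$ from fragile neighbours, the forbidden set for $v$ can already exceed $|L(v)|=\ell$. The same obstruction applies to $u'$ and to any other neighbour of $u$, since all of them are $\ell^+$-vertices. The ``extra slack $\ell-3$'' bounds $d_G(u)$, not $d_G(v)$, so it does not help you recolour a high-degree neighbour; and ``the local structure at the crossing $c$'' does not, by itself, bound $|\phi(N_{G-u}(v))|$ or the number of fragile neighbours of $v$.

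The paper sidesteps recolouring by modifying $G'$ rather than the colouring. In your notation: once $u$ is deleted, the edge $uu'$ disappears, so the crossing $c$ vanishes and $u'$ and $v$ lie on a common face of the resulting drawing; hence $G'=G-u+u'v$ (or $G'=G-u$ if $u'v\in E(G)$) is still in $\mathcal{G}_1^-$. Minimality then gives a dynamic $L$-colouring $c$ of $G'$ in which $c(u')\neq c(v)$ automatically, so $N_G(u)$ is bichromatic without any recolouring. To finish, one colours $u$ from $L(u)$ avoiding the colours on $N_G(u)$ \emph{and two further colours}: the colour on $v'$ (a neighbour of $v$ in $G$) and the colour on some neighbour $u''$ of $u'$ in $G$ distinct from $u,v,v'$. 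These extra exclusions are needed because passing from $G'$ back to $G$ may delete the added edge $u'v$, so the dynamic condition at $u'$ and at $v$ must be re-secured through $u$ (via $c(u)\neq c(u'')$ and $c(u)\neq c(v')$). This is exactly where the hypothesis $d_G(u)\le\ell-3$ is used: the forbidden set has size at most $d_G(u)+2\le\ell-1$, so $L(u)$ still contains an admissible colour.
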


\begin{proof}
Suppose, to the contrary, that $u$ is true and $d_G(u)\leq \ell-3$. Let 
$f=upw$ be the false 3-face that is incident with $u$, where $p$ is a false vertex. Basically we assume 
$uv$ crosses $ww'$ in $G$ at a point $p$.
Let $N_G(u)=\{v,w,x_1,\ldots, x_{t}\}$. Since $d_G(u)\leq \ell-3$, by Proposition \ref{edge-with-big-vertex}, $d(v),d(w)\geq \ell$ and $d_G(x_i)\geq \ell$ for each $1\leq i\leq t$. 
If $vw\in E(G)$, then let $G'=G-u$. If $vw\not\in E(G)$, then let $G'=G-u+vw$.
In any case, we can see that $G'$ is still 1-planar, i.e, $G'\in\mathcal{G}_{1}^-$. 
Let $v'$ be another neighbor of $v$ in $G'$ that is not $u$ or $w$ or $w'$. 
By the minimality of $G$, $G'$ has a dynamic $L$-coloring $c$. Extending $c$ to a dynamic $L$-coloring of $G$ by coloring $u$ with a color $c(u)\in L(u)\backslash F(u)$, where $F(u)=\{c(v),c(w),c(x_1),\ldots, c(x_t),c(v'),c(w')\}$, we get a contradiction. Note that $|F(u)|=t+4=d_G(u)+2\leq \ell-1$.
\end{proof}

\begin{prop}\label{big-face}
If $G$ is a dynamically $\ell$-minimal graph in $\mathcal{G}_{1}^-$ with $\ell\geq 5$ and $f=wuvy_1\cdots y_s$ is 
a $4^+$-face of $\gx$ with $d_G(u)\leq \ell-3$, where $s\geq 1$, then both $v$ and $w$ are false. \end{prop}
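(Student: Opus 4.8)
The plan is to argue by contradiction along the lines of Propositions~\ref{true-3-face} and~\ref{false-3-face}. Since $d_G(u)$ is defined, $u$ is a true vertex, and since $w$ and $v$ are the two $\gx$-neighbors of $u$ on $f$ and thus play symmetric roles, it suffices to derive a contradiction under the assumption that $v$ is true. As $u$ and $v$ are both true and adjacent in $\gx$, the $\gx$-edge $uv$ is an entire uncrossed edge of $G$, so $uv\in E(G)$ and hence $d_G(v)\geq\ell$ by Proposition~\ref{edge-with-big-vertex}. Let $z\in N_G(u)$ be the vertex such that the $\gx$-edge $uw$ lies on the $G$-edge $uz$: if $w$ is true then $z=w$; if $w$ is false then $z$ is the other endpoint of the $G$-edge that is crossed at $w$ and passes through $u$. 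In all cases $z\neq v$ (the edges $uv$ and $uz$ of $G$ are distinct), $d_G(z)\geq\ell$ by Proposition~\ref{edge-with-big-vertex}, and $vz\notin E(G)$, since otherwise $uvz$ would be a triangle, which would force $d_G(u)\geq\ell$ by Proposition~\ref{true-3-face}, contradicting $d_G(u)\leq\ell-3$.

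Next I would set $G':=G-u+vz$ and verify that $G'\in\mathcal{G}_1^-$. One draws $vz$ by following closely the edge $uv$ and then the edge $uz$, both of which disappear in $G-u$. If $w=z$ is true, then $v$ and $z$ lie on the boundary of a common face of the associated plane graph of $G-u$, so $vz$ can be added with no new crossing. If $w$ is false, the routed curve $vz$ crosses exactly one edge, namely the second $G$-edge through $w$; that edge is uncrossed in $G-u$ because its crossing partner $uz$ has been removed, so the resulting drawing is still $1$-plane. Since $G'$ arises from $G$ by deleting one vertex and $d_G(u)\geq 2$ edges (Proposition~\ref{min-deg}) and adding a single edge, $|V(G')|+|E(G')|<|V(G)|+|E(G)|$, so by the minimality of $G$ there is a dynamic $L$-coloring $c$ of $G'$.

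Finally I would extend $c$ to a dynamic $L$-coloring of $G$ by coloring $u$ with a color of $L(u)\setminus F(u)$, where $F(u)$ collects the colors of the (at most $d_G(u)$) neighbors of $u$ together with at most one further reserved color for each of $v$ and $z$. The reservation for $v$ is needed only when $N_{G'}(v)$ --- which carries at least two colors since $d_{G'}(v)=d_G(v)\geq\ell$ and $c$ is dynamic --- carries exactly two colors with $z$'s color occurring there uniquely; reserving the other of these two colors then guarantees that $N_G(v)=(N_{G'}(v)\setminus\{z\})\cup\{u\}$ still sees two colors, and symmetrically for $z$. The dynamic requirement at $u$ holds automatically because $v,z\in N_G(u)$ and $vz\in E(G')$ give $c(v)\neq c(z)$, and the dynamic requirement at every other neighbor $x$ of $u$ persists since $d_{G'}(x)=d_G(x)-1\geq\ell-1\geq 2$ and $N_{G'}(x)\subseteq N_G(x)$. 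Hence $|F(u)|\leq d_G(u)+2\leq(\ell-3)+2=\ell-1<\ell=|L(u)|$, a suitable color for $u$ exists, and $G$ turns out to be dynamically $L$-colorable --- a contradiction.

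I expect the crux to be the verification that $G-u+vz$ is still $1$-plane, which rests on the local picture at the (possibly false) vertex $w$: deleting $u$ destroys the crossing at $w$ and thereby frees the partner edge to absorb the single crossing created by the new edge $vz$. Everything else is the same bounded-palette counting already used in the earlier propositions.
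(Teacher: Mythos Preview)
Your proof is correct and follows essentially the same approach as the paper's: delete $u$, add an edge between $v$ and the vertex on the $w$-side (the paper's $w$ or $u'$, your $z$), verify $1$-planarity, and extend the dynamic $L$-coloring by forbidding at most $d_G(u)+2$ colors at $u$. Your version is slightly more streamlined in that it unifies the paper's two cases into one via the definition of $z$, and it uses Proposition~\ref{true-3-face} to rule out $vz\in E(G)$ at the outset, making the paper's separate subcase ``$vw\in E(G)$'' (resp.\ ``$u'v\in E(G)$'') unnecessary.
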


\begin{proof}
Suppose, to the contrary, that at least one of $v$ and $w$ is true. We divide the proof into two major cases.

First of all, we assume that both $v$ and $w$ are true. If $vw\in E(G)$, then let $G'=G-u$. If $vw\not\in E(G)$, then let $G'=G-u+vw$.
In any case, it is easy to see that $G'$ is still 1-planar, i.e, $G'\in \mathcal{G}_1^-$. By the minimality of $G$, $G'$ has a dynamic $L$-coloring $c$. Let $N_G(u)=\{v,w,x_1,\ldots,x_t\}$. By Proposition \ref{edge-with-big-vertex}, any neighbor of $u$ in $G$ has degree at least $\ell$ since $d_G(u)\leq \ell-3$.  Let $v'$ be another neighbor of $v$ in $G$ that is not among $\{x_1,\ldots,x_t,u,w\}$, and let $w'$ be another neighbor of $w$ in $G$ that is not among $\{x_1,\ldots,x_t,u,v,v'\}$ (such vertices exist since the number of the excluded vertices are at most $t+3=d_G(u)+1\leq \ell-2$). Color $u$ with a color $c(u)\in L(u)\backslash F(u)$, where $F(u)=\{c(x_1),\ldots,c(x_t),c(v),c(w),c(v'),c(w')\}$. Since $|F(u)|=t+4=d_G(u)+2\leq \ell-1$, we obtain a dynamic $L$-coloring of $G$, a contradiction.

On the other hand, we assume, by symmetry, that $v$ is true and $w$ is false. Basically we assume that $uu'$ crosses $w'y_s$ in $G$ at the point $w$. If $u'v\in E(G)$, then let $G'=G-u$, and otherwise let $G'=G-u+u'v$. The 1-planarity of $G'$ is easy to be confirmed (note that the crossing point $w$ in $G$ is removed by the deletion of $u$, and if we have to add the edge $u'v$, it can be drawn so that it is only crossed by $w'y_s$ in $G'$). By the minimality of $G$, $G'\in \mathcal{G}_1^-$ has a dynamic $L$-coloring $c$. Let $N_G(u)=\{u',v,x_1,\ldots,x_t\}$. By Proposition \ref{edge-with-big-vertex}, any neighbor of $u$ in $G$ has degree at least $\ell$ since $d_G(u)\leq \ell-3$.  Let $u''$ or $v'$ be another neighbor of $u'$ or $v$ in $G$ that is not among $\{x_1,\ldots,x_t,u,v\}$ or $\{x_1,\ldots,x_t,u,u''\}$, respectively.  
Color $u$ with a color $c(u)\in L(u)\backslash F(u)$, where $F(u)=\{c(x_1),\ldots,c(x_t),c(v),c(u'),c(u''),c(y_1)\}$. Since $|F(u)|=t+4=d_G(u)+2\leq \ell-1$, we get a dynamic $L$-coloring of $G$, a contradiction.
\end{proof}

\section{Discharging: the Proof of Theorem \ref{main-thm}}

If Theorem \ref{main-thm} is false, then there is a dynamically $11$-minimal 1-planar graph $G$. 
For every element $x\in V(\gx)\cup F(\gx)$, we assign an initial charge $c(x)=d_{\gx}(x)-4$. By the well-known Euler formulae $|V(\gx)|+|F(\gx)|-|E(\gx)|=2$ on the plane graph $\gx$, we have
$$\sum_{x\in V(\gx)\cup F(\gx)}c(x)=-8<0.$$

If there is a 4-face $f=uxvy$ in $\gx$ such that $d_{\gx}(u)\geq 11$, $2\leq d_{\gx}(v):=d\leq 3$ and $x,y$ are false vertices, then we call $f$ a \emph{special $4$-face}. 

Initially, we define the following discharging rules  (also see Figure \ref{Rules}) so that the charges are transferred among the elements in
$V(\gx)\cup F(\gx)$. 

\begin{enumerate}
    \item[R1.] Every true 3-face in $\gx$ receives $\frac{1}{3}$ from each of its incident $11^+$-vertices; 
    \item[R2.] Every false 3-face in $\gx$ receives $\frac{1}{2}$ from each of its incident $9^+$ vertices;
    \item[R3.] Every $11^+$-vertex incident with a special $4$-face $f$ sends $1$ to $f$, from which the special $3^-$-vertex on $f$ receives $1$;
    \item[R4.] Every  $5^+$-face in $\gx$ sends 1 to each of its incident special 2-vertices if there are some ones;
    \item[R5.] After applying R1--R4, every  $5^+$-face in $\gx$ redistributes its charge equitably to each of its incident non-special 2-vertices or (special or non-special) $3$-vertices if there are some ones.
\end{enumerate}

\begin{figure}
    \centering
    \includegraphics[width=15cm]{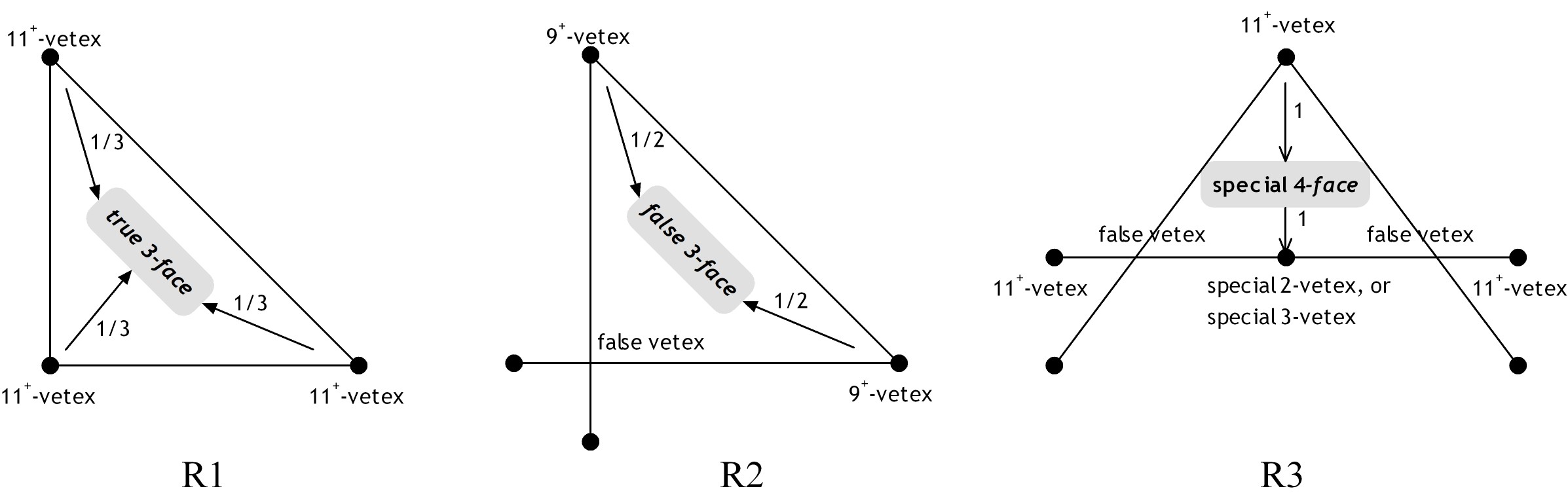}
    \caption{The discharging rules R1--R3}
    \label{Rules}
\end{figure}

Let $c'(x)$ be the final charge of the element $x\in V(\gx)\cup F(\gx)$ after discharging. Clearly
$$\sum_{x\in V(\gx)\cup F(\gx)}c'(x)=\sum_{x\in V(\gx)\cup F(\gx)}c(x)=-8<0.$$
In the following, we show that $c'(x)\geq 0$ for every $x\in V(\gx)\cup F(\gx)$ by Claims \ref{clm:5-minus-face-nonneg}, \ref{clm:6-plus-face-nonneg}, \ref{clm:2-vertex-nonneg}, \ref{clm:3-vertex-nonneg}, and \ref{clm:4-plus-vertex-nonneg}. This  contradiction completes the proof of Theorem \ref{main-thm}.

\begin{clm}\label{clm:5-minus-face-nonneg}
 Every $5^-$-face in $\gx$ has a nonnegative final charge.
\end{clm}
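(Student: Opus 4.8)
The plan is to run through the possible sizes of a $5^-$-face $f$ of $\gx$. First I would record two facts used throughout: for a true vertex $u$ one has $d_{\gx}(u)=d_G(u)$, since each edge of $G$ at $u$ contributes exactly one segment at $u$ in $\gx$, while every false vertex has degree $4$; and $\gx$ has no face of size at most $2$, because a loop or two parallel edges in $\gx$ would force the same configuration in the simple graph $G$, while a $2$-face bounded by two half-edges would make two edges of $G$ through a common endpoint cross, impossible in a crossing-minimal drawing. Hence $d_{\gx}(f)\in\{3,4,5\}$, and I would treat the three sizes separately.

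If $d_{\gx}(f)=3$, the initial charge of $f$ is $-1$, so I would show it gains exactly $1$. When $f$ is a true $3$-face its boundary is a triangle of $G$ on three true vertices, so Proposition~\ref{true-3-face} (with $\ell=11$) makes each of them an $11^+$-vertex, and R1 sends $\frac13$ from each to $f$; thus $c'(f)=-1+3\cdot\frac13=0$. When $f$ is a false $3$-face it carries exactly one false vertex $p$ (it is incident with at most $\lfloor d_{\gx}(f)/2\rfloor=1$ false vertex) and two true vertices $u,w$, which have $G$-degree at least $\ell-2=9$ by Proposition~\ref{false-3-face}; so R2 sends $\frac12$ from each of $u,w$ (and nothing from the degree-$4$ vertex $p$), giving $c'(f)=-1+2\cdot\frac12=0$.

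If $d_{\gx}(f)=4$, the initial charge is $0$, and the only rule under which a $4$-face parts with charge is R3, which applies precisely when $f$ is a special $4$-face; in that case $f$ has two false vertices, exactly one incident $11^+$-vertex and exactly one incident $3^-$-vertex, so R3 lets $f$ receive $1$ and then send $1$, for net change $0$. A non-special $4$-face is touched by no rule. Either way $c'(f)=0$.

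The remaining and main case is $d_{\gx}(f)=5$, where the initial charge is $+1$ and $f$ only loses charge: first via R4, one unit to each incident special $2$-vertex, and then via R5, distributing whatever is left. The key point I would establish is that a $5$-face is incident with at most one special $2$-vertex. By definition a special $2$-vertex $v$ is the $3^-$-vertex of some special $4$-face, so $v$ has degree $2$ with both neighbours false, one of its two incident faces is that special $4$-face, and the other is $f$; hence the boundary walk of $f$ contains a segment (false)$-v-$(false). Two such vertices would place three or more false vertices on a closed walk of length $5$, and tracing the finitely many possibilities shows this is impossible without either shortening the walk below length $5$ or creating an edge joining two false vertices, contrary to the facts that a $5$-face carries at most $\lfloor 5/2\rfloor=2$ false vertices and that no two false vertices are adjacent in $\gx$. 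Consequently $f$ loses at most $1$ through R4, retains a nonnegative charge, and R5 only redistributes that nonnegative amount, so $c'(f)\ge 0$. I expect this last step — the short case analysis ruling out two special $2$-vertices on a $5$-face — to be the only delicate point; everything else is direct bookkeeping with R1--R5 and Propositions~\ref{true-3-face} and~\ref{false-3-face}.
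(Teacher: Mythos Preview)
Your proof is correct and follows essentially the same case split and reasoning as the paper's. The only minor difference is in the $5$-face case: the paper invokes Proposition~\ref{big-face} to conclude that any $2$-vertex on $f$ has both face-neighbours false (hence at most one $2$-vertex on a $5$-face), whereas you obtain the same ``both neighbours false'' property for special $2$-vertices directly from the definition of a special $4$-face --- either way the false-vertex count on a $5$-cycle finishes the argument.
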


\begin{proof}
If $f$ is a true 3-face (i.e.,\,a triangle in $G$), then every vertex incident with $f$ is a $11^+$-vertex by Proposition \ref{true-3-face}, and thus $c'(f)=3-4+3\times\frac{1}{3}=0$ by R1.
If $f$ is a false 3-face, then $f$ is incident with two $9^+$-vertices by Proposition \ref{false-3-face}, which implies $c'(f)=3-4+2\times\frac{1}{2}=0$ by R2.

If $f$ is a non-special $4$-face, then no rule is valid for $f$ and thus $c'(f)= c(f)=0$. 
If $f$ is a special $4$-face, then  $c'(f)=4-4+1-1=0$ by R3.

If $f$ is a 5-face, then $f$ is incident with at most one 2-vertex by Proposition \ref{big-face}. Therefore, the remaining charge of $f$ after R1--R4 are applied to it is at least $5-4-1=0$, and thus $f$ has a nonnegative final charge by R5.
\end{proof}

\begin{clm}\label{clm:6-plus-face-nonneg}
Every $6$-face is incident with at most two special $2$-vertices in $\gx$.
Therefore, every $6^+$-face in $\gx$ has a nonnegative final charge. 
\end{clm}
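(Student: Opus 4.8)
The plan is to reduce the asserted charge inequality to a structural statement about $\partial f$ and then settle the single problematic face size using crossing‑minimality. First I would observe that the only discharging rules that move charge off a $6^+$-face $f$ are R4 and R5: R1 and R2 act on $3$-faces and R3 on special $4$-faces, none of which $f$ is. Since R4 sends exactly $1$ to each incident special $2$-vertex and R5 then redistributes a non-negative remainder, it suffices to prove that an $m$-face with $m=d_{\gx}(f)\ge 6$ is incident with at most $m-4$ special $2$-vertices; I would also record that each such vertex appears on $\partial f$ exactly once, since its two faces in $\gx$ are $f$ and its special $4$-face, and these are distinct.

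For $m\ge 7$ this is immediate: two special $2$-vertices cannot be consecutive on $\partial f$, because otherwise one would be a $\gx$-neighbour of the other, contradicting that a special $2$-vertex has both of its $\gx$-neighbours false. Hence $f$ carries at most $\lfloor m/2\rfloor\le m-4$ special $2$-vertices, and $f$ ends with non-negative charge.

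The crux is $m=6$, where $\lfloor 6/2\rfloor=3$ must be improved to $2$. Suppose a $6$-face $f$ has three special $2$-vertices $v_1,v_2,v_3$. Being pairwise non-consecutive on $\partial f$ they force $\partial f=v_1p_1v_2p_2v_3p_3$ with $p_1,p_2,p_3$ false, and these are distinct (otherwise some $p_j$ would be adjacent to three $2$-vertices, forcing a $G$-edge between two $2$-vertices, against Proposition \ref{edge-with-big-vertex}). Let $F_i$ be the face at $v_i$ other than $f$; it is the special $4$-face of $v_i$, of the form $u_ix_iv_iy_i$ with $u_i$ an $11^+$-vertex and $\{x_i,y_i\}$ the $\gx$-neighbours of $v_i$, so $u_1$ is adjacent to $p_1,p_3$, $u_2$ to $p_1,p_2$, and $u_3$ to $p_2,p_3$. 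Examining $p_1$, whose four $\gx$-neighbours are then $v_1,v_2,u_1,u_2$: the two $G$-edges crossing at $p_1$ pair these four up; the pairing $\{v_1v_2,u_1u_2\}$ is excluded by Proposition \ref{edge-with-big-vertex}, and the pairing $\{v_1u_1,v_2u_2\}$ is excluded because then the two halves of the $G$-edge $v_1u_1$ would be two consecutive sides of $F_1$, impossible since the two halves of an edge at a crossing are opposite (not consecutive) in the rotation there. Hence $p_1$ is the crossing of $v_1u_2$ and $v_2u_1$; symmetrically $p_2$ is the crossing of $v_2u_3$ and $v_3u_2$, and $p_3$ of $v_3u_1$ and $v_1u_3$. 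In particular $u_1,u_2,u_3$ are distinct, the six $G$-edges at $v_1,v_2,v_3$ are $v_1u_2,u_2v_3,v_3u_1,u_1v_2,v_2u_3,u_3v_1$, forming a $6$-cycle $C$, and $C'=u_1p_1u_2p_2u_3p_3$ is a cycle of $\gx$.

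Finally I would show $C'$ bounds a closed disc $R=\overline f\cup\overline{F_1}\cup\overline{F_2}\cup\overline{F_3}$ whose interior meets $G$ only in $v_1,v_2,v_3$ and the six edge-pieces joining them to $p_1,p_2,p_3$: the discs $\overline{F_i}$ are glued to $\overline f$ along disjoint boundary arcs that together cover $\partial f$, and they meet one another only at the points $p_j$, so the union is a disc with boundary $C'$. Erasing the six edges of $C$ leaves $R$ empty except for the vertices $v_1,v_2,v_3$; since $R$ is a disc with $u_1,u_2,u_3$ on its boundary, $C$ can be redrawn inside $R$ with no crossings (as a non-crossing triangle with corners $u_1,u_2,u_3$ and $v_1,v_2,v_3$ on its sides), while the rest of $G$ is untouched. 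This destroys the crossings $p_1,p_2,p_3$ and creates none, producing a $1$-plane drawing of $G$ with fewer crossings, contrary to the choice of $G$. The main obstacle is the middle step: pinning down the exact crossing pattern at each $p_i$ from the local rotations and the special‑$4$-face conditions, and then verifying rigorously that $R$ is a disc containing nothing but those three vertices, so that the redrawing of $C$ is legitimate.
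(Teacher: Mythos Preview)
Your proposal is correct and follows essentially the same route as the paper: for $m\ge 7$ you use the alternation forced by Proposition~\ref{big-face} to bound the number of special $2$-vertices by $\lfloor m/2\rfloor\le m-4$ (the paper uses the same alternation, stated slightly differently), and for $m=6$ you pin down the crossing pattern at $p_1,p_2,p_3$ and exhibit a redrawing of the resulting $6$-cycle with three fewer crossings, which is exactly the paper's argument (the paper relies on Figure~\ref{6-face} where you argue the rotation at each $p_i$ directly). Your write-up is in fact more careful than the paper's about why the pairing at each $p_i$ must be $\{v_iu_{i+1},v_{i+1}u_i\}$ and why the union $\overline f\cup\overline{F_1}\cup\overline{F_2}\cup\overline{F_3}$ is a disc.
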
 

\begin{proof}
Suppose, to the contrary, that $f=uxvywz$ is a 6-face such that $u,v,w$ are special 2-vertices and $x,y,z$ are false vertices. According to the definition of the special 2-vertices, there are three $11^+$-vertices $u',v'$ and $w'$ such that $uv'$ (resp.\,$v'w$ and $u'w$) crosses $u'v$ (resp.\,$vw'$ and $uw'$) in $G$ at the crossing $x$ (resp.\,$y$ and $z$), see Figure \ref{6-face}(a). Pulling the vertex $v$ (resp.\,$w$) into the face of $\gx$ that is incident with the path $u'zw'$ (resp.\,$u'xv'$), we get another one 1-planar drawing of $G$ with three less crossings, see Figure \ref{6-face}(b). This contradicts the initial assumption that the drawing of $G$ has the minimum number of crossings. 

Therefore, every $6$-face in $\gx$ has charge at least $6-4-2\times 1=0$ after  R1--R4 are applied to it, and thus has nonnegative final charge by R5. 

On the other hand, every $d$-face $f$ with $d\geq 7$ is incident with at most $d/2$  2-vertices if $d$ is even, and at most $(d-3)/2$ 2-vertices if $d$ is odd, by Proposition \ref{big-face}. Therefore, after R1--R4 are applied to $f$, $f$ remains charge at least $d-4-d/2=(d-8)/2\geq 0$ if $d$ is even (i.e., $d\geq 8$), and at least $d-4-(d-3)/2=(d-5)/2\geq 1$ if $d$ is odd (i.e., $d\geq 7$). Hence $f$ has nonnegative final charge by R5.
\end{proof}

\begin{figure}
    \centering
    \includegraphics[width=15cm]{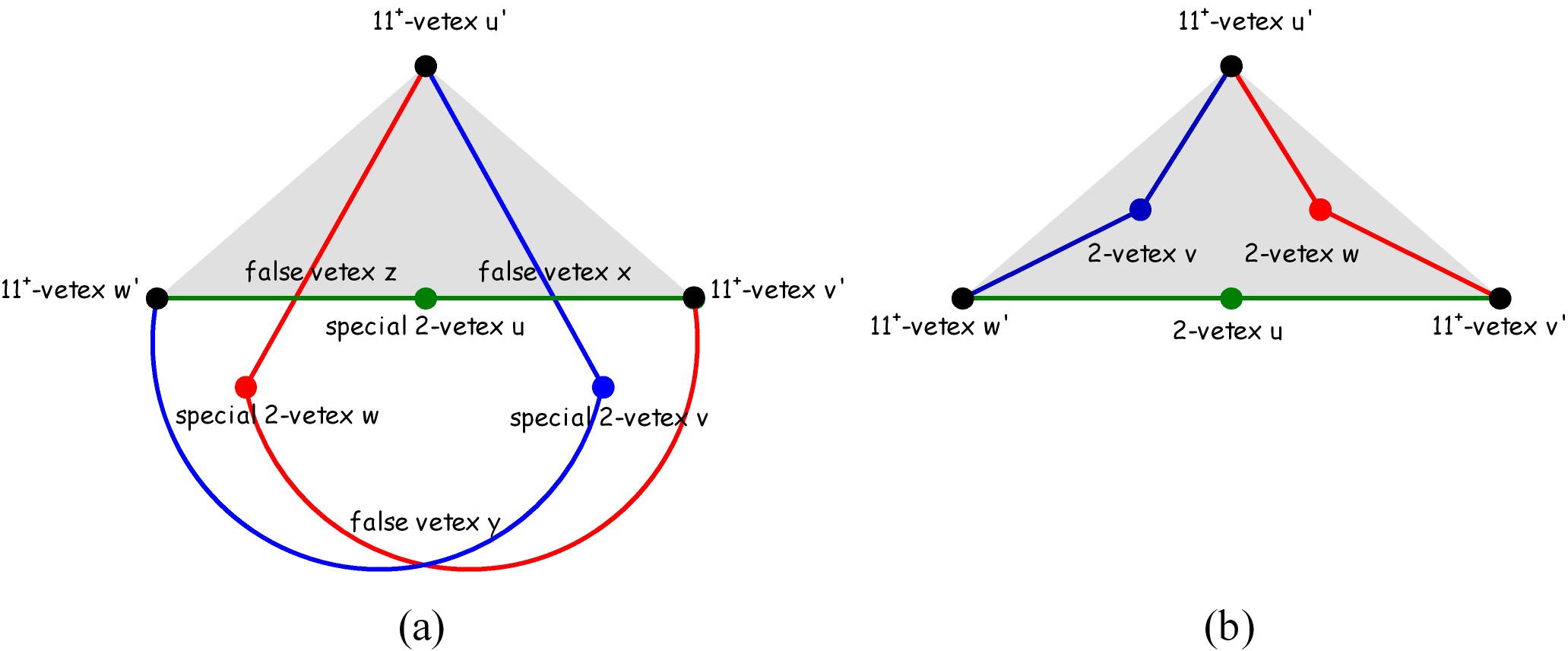}
    \caption{A redrawing with three less crossings: the proof of the first part of Claim \ref{clm:6-plus-face-nonneg}}
    \label{6-face}
\end{figure}

\begin{clm}\label{clm:5-plus-face-to-small-v}
Let $f$ be a $5^+$-face in $\gx$ with $u,x,v,y$ and $w$ being five consecutive vertices on the boundary of $f$ such that $u$ is a $11^+$-vertex, $v$ is a non-special $2$-vertex or a (special or non-special) $3$-vertex, and $x,y$ are false vertices.

$(1)$ If $w$ is a $11^+$-vertex, then $f$ sends at least $2$ to $v$;

$(2)$ If $w$ is a $10^-$-vertex, then $f$ sends at least $1$ to $v$.
\end{clm}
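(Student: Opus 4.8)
The plan is to analyze the charge that the face $f$ retains after rules R1–R4 have been applied, and then see how much R5 redistributes to $v$. First I would bound the charge lost by $f$ through R1–R4. Rules R1 and R2 concern only $3$-faces, so they are irrelevant here since $d_{\gx}(f)\geq 5$. Rule R3 sends charge away from $11^+$-vertices, not from $5^+$-faces, so it costs $f$ nothing. Only R4 removes charge from $f$: each incident special $2$-vertex takes $1$. By Proposition~\ref{big-face}, a $d$-face with $d\geq 5$ is incident with at most $\lfloor d/2\rfloor$ vertices of degree $2$ (and for odd $d$ at most $(d-3)/2$ such vertices), because a $4^+$-face $wuvy_1\cdots y_s$ with $d_G(u)\le \ell-3$ forces the two neighbors $v,w$ of $u$ along $f$ to be false; so $2$-vertices on $f$ cannot be consecutive or separated by a single false vertex in an obstructive way. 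Thus I can pin down the residue $c''(f)$ of $f$ after R4 as a function of $d_{\gx}(f)$ and of how many of its low-degree vertices are special versus non-special.

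Next I would count the ``recipients'' of R5 on $f$: the non-special $2$-vertices and the $3$-vertices incident with $f$. Here the structure around $v$ is prescribed by the hypothesis — $v$ sits between two false vertices $x,y$, whose other endpoints include the $11^+$-vertex $u$ on one side and $w$ on the other. For part $(1)$, when $w$ is also a $11^+$-vertex, I would argue that in fact $d_{\gx}(f)\geq 6$ is forced in many configurations, and more to the point that $f$ has ``enough'' residual charge and ``few enough'' recipients that the equitable share each recipient gets is at least $2$. The clean way is: show $c''(f)\ge 2r$, where $r$ is the number of R5-recipients on $f$; then R5 gives each of them at least $c''(f)/r\ge 2$. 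The worst case to check is a short face (likely a $6$-face with $u,w$ both $11^+$ and the three false vertices forcing a specific pattern), and I would enumerate the few possible cyclic patterns of true/false vertices on such a face, using Proposition~\ref{big-face} each time to rule out two low-degree true vertices being ``too close.'' For part $(2)$, with $w$ a $10^-$-vertex, the target is weaker ($f$ sends at least $1$ to $v$), so it suffices to show $c''(f)\ge r$, i.e., that the residue of $f$ after R4 is at least the number of R5-recipients; this again follows from the $\lfloor d/2\rfloor$ (or $(d-3)/2$) bound on $2$-vertices together with the observation that a $3$-vertex on $f$ also ``costs'' at most its fair share.

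The main obstacle I anticipate is the bookkeeping for short faces in part $(1)$: ruling out cyclic boundary patterns on a $6$-face (and possibly a $7$- or $8$-face) in which the number of small-degree recipients is large relative to the surviving charge. Concretely, I need that whenever two $11^+$-vertices $u,w$ flank $v$ as described, the remaining boundary of $f$ cannot also contain another low-degree true vertex wedged between false vertices without violating either the minimum-crossing choice of the drawing or Proposition~\ref{big-face}; establishing this cleanly, rather than by a long case split, is where I would spend the most effort. Once that structural fact is in hand, the inequality $c''(f)\ge 2r$ (resp. $c''(f)\ge r$) is a short arithmetic check, and R5's equitable redistribution delivers the claimed bounds.
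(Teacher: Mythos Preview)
Your overall strategy matches the paper's: with $a$ the number of special $2$-vertices on $f$ and $b$ the number of R5-recipients (non-special $2$-vertices and all $3$-vertices), the residue after R1--R4 is exactly $d-4-a$, and R5 hands $v$ at least $(d-4-a)/b$; so the goal is $d-4-a\ge 2b$ in~(1) and $d-4-a\ge b$ in~(2). Where your plan diverges is in how to obtain these inequalities. You propose to enumerate boundary patterns on short faces and, in particular, to argue that ``the remaining boundary of $f$ cannot also contain another low-degree true vertex wedged between false vertices.'' That structural assertion is false: nothing forbids further $2$- or $3$-vertices elsewhere on the boundary (a $10$-face, for instance, can carry two of them in part~(1)), so this line would not close the argument, and the minimum-crossing hypothesis is not needed here at all.

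The paper avoids every case split with one counting observation, and the point you are underusing is that Proposition~\ref{big-face} applies to \emph{every} true vertex of degree at most $\ell-3=8$ on a $4^+$-face, not just to $2$-vertices. Hence each of the $a+b$ small vertices on $f$ has both of its face-neighbours false. In part~(1), since $u$ and $w$ are true, the $a+b-1$ small vertices lying outside $\{u,x,v,y,w\}$ cannot be adjacent on $f$ to $u$ or $w$, and together with the false vertices flanking them they occupy at least $(a+b-1)+(a+b)$ of the remaining $d-5$ boundary positions; this gives $d\ge 2a+2b+4$, whence $(d-4-a)/b\ge(a+2b)/b\ge 2$. In part~(2) only $u$ is guaranteed true, so the same count over the $d-4$ positions outside $\{u,x,v,y\}$ yields $d\ge 2a+2b+2$, hence $(d-4-a)/b\ge 1$ whenever $a+b\ge 2$; the leftover case $a=0$, $b=1$ gives $v$ all of $d-4\ge 1$ directly. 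No enumeration of $6$-, $7$-, or $8$-faces is required.
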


\begin{proof}
Let $a$ (resp.\,$b$) be the number of special $2$-vertices (resp.\,non-special $2$-vertices and special or non-special $3$-vertices) that are incident with $f$.

(1) Suppose that $w$ is a $11^+$-vertex. By Proposition \ref{big-face}, there are at least $a+(b-1)+1$ false vertices in $V_{\gx}(f)\backslash \{u,x,v,y,w\}$. This implies that 
$$a+(b-1)+a+(b-1)+1+5=2a+2b+4\leq d.$$
Therefore, $f$ sends to $v$ at least
\begin{align*}
    \frac{d-4-a}{b}\geq \frac{2a+2b+4-4-a}{b}\geq 2
\end{align*}
by R4 and R5.

(2) Suppose that $w$ is a $10^-$-vertex. By Proposition \ref{big-face}, there are at least $a+(b-1)$ false vertices in $V_{\gx}(f)\backslash \{u,x,v,y\}$. This implies that 
$$a+(b-1)+a+(b-1)+4=2a+2b+2\leq d.$$
Therefore, by R4 and R5, $f$ sends to $v$ at least
\begin{align*}
    \frac{d-4-a}{b}\geq \frac{2a+2b+2-4-a}{b}\geq 1
\end{align*}
if $a+b\geq 2$.

On the other hand, if $a+b\leq 1$, then $a=0$ and $b=1$, since $b\geq 1$. Hence $f$ would send at least $d-4\geq 1$ to $v$ by R5.
\end{proof}

\begin{clm}\label{clm:2-vertex-nonneg}
Every $2$-vertex in $\gx$ has a nonnegative final charge. 
\end{clm}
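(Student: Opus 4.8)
The plan is to prove that every $2$-vertex $v$ of $\gx$ receives total charge at least $2$ during the discharging; since $c(v)=d_{\gx}(v)-4=-2$ and no rule removes charge from a $2$-vertex, this gives $c'(v)\ge 0$. Note first that $v$, having degree $2$, is a true vertex (false vertices have degree $4$), hence a $2$-vertex of $G$, and by Proposition \ref{edge-with-big-vertex} both neighbours $a,a'$ of $v$ in $G$ are $11^+$-vertices.

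The crucial first step is to establish that \emph{both} edges $va$ and $va'$ are crossed in $G$. Suppose not, say $va$ is uncrossed; then $a$ is a boundary neighbour of $v$ on each of the two faces $f$ of $\gx$ incident with $v$. If such an $f$ were a $4^+$-face, then Proposition \ref{big-face}, applied with $v$ playing the role of its vertex $u$, would force both boundary neighbours of $v$ on $f$ to be false, contradicting that $a$ is true; hence every face at $v$ is a $3$-face. But a $3$-face at $v$ is then either a triangle of $G$, contradicting Proposition \ref{true-3-face} since $d_G(v)=2$, or a false $3$-face, contradicting Proposition \ref{false-3-face}. Therefore both edges at $v$ are crossed, so $v$'s two neighbours in $\gx$ are false vertices $x$ and $y$, arising from crossings of $va$ and $va'$ with two edges $e_x$ and $e_y$ (distinct, since an edge of a $1$-plane graph is crossed at most once). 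Neither of the two faces $f_1,f_2$ at $v$ is a $3$-face, as $x,y$ are non-adjacent, so both are $4^+$-faces; moreover, for $i\in\{1,2\}$ the boundary of $f_i$ around $v$ reads $s_i,x,v,y,t_i$, where $s_i$ is an endpoint of $e_x$ and $t_i$ an endpoint of $e_y$, never $a$ or $a'$, because the two halves of a crossed edge are opposite in the rotation at the crossing. Since $\{s_1,s_2\}$ and $\{t_1,t_2\}$ are the endpoint sets of $e_x$ and $e_y$, Proposition \ref{edge-with-big-vertex} gives $\max\{d(s_1),d(s_2)\}\ge 11$ and $\max\{d(t_1),d(t_2)\}\ge 11$.

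I would then distinguish two cases. If one face, say $f_1$, is a $4$-face, then necessarily $f_1=pxvy$ with $p=s_1=t_1$ a common endpoint of $e_x$ and $e_y$, and $f_2$ is a $5^+$-face because $s_2\ne t_2$. If $d(p)\ge 11$, then $f_1$ is a special $4$-face with special $3^-$-vertex $v$, so $v$ is a special $2$-vertex and receives $1$ from $f_1$ by R3 and $1$ from $f_2$ by R4. If $d(p)\le 10$, then $f_1$ is not special, $v$ is a non-special $2$-vertex, Proposition \ref{edge-with-big-vertex} forces $d(s_2),d(t_2)\ge 11$, and part (1) of Claim \ref{clm:5-plus-face-to-small-v} applied to $f_2$ with the five consecutive boundary vertices $s_2,x,v,y,t_2$ shows that $f_2$ sends at least $2$ to $v$. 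If instead both $f_1$ and $f_2$ are $5^+$-faces, then $v$ is non-special, and I would use the $11^+$-endpoints of $e_x$ and $e_y$ to traverse each face so that an $11^+$-vertex plays the role of $u$ in Claim \ref{clm:5-plus-face-to-small-v}: if both these $11^+$-endpoints lie on the same face, part (1) makes that face send at least $2$ to $v$; otherwise part (2) makes each of $f_1,f_2$ send at least $1$ to $v$. In every case $v$ receives at least $2$, so $c'(v)\ge 0$.

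The step I expect to be the main obstacle is the opening structural claim that no edge at $v$ is uncrossed, together with the bookkeeping of which endpoints of $e_x$ and $e_y$ lie on which of $f_1$ and $f_2$; once the local configuration around $v$ is pinned down, the charge count is forced and the appeals to rules R3--R5 and to Claim \ref{clm:5-plus-face-to-small-v} are routine. A minor technical point left to handle is the degenerate case where a face boundary revisits a vertex, so that the five consecutive boundary vertices of Claim \ref{clm:5-plus-face-to-small-v} are not all distinct; but such a situation either reduces to the $4$-face case or is settled by the identical argument.
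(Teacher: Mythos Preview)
Your proof is correct and follows essentially the same approach as the paper's. The only cosmetic difference is in the opening: the paper first notes (via Propositions \ref{true-3-face} and \ref{false-3-face}) that no face at $v$ is a $3$-face and then applies Proposition \ref{big-face} directly to conclude that both $\gx$-neighbours of $v$ are false, whereas you package the same two ingredients into a proof by contradiction that both edges at $v$ are crossed; the subsequent case analysis (special/non-special, one $4$-face versus two $5^+$-faces, and the appeal to Claim \ref{clm:5-plus-face-to-small-v}) is identical in substance. Your initial observation that $a,a'$ are $11^+$-vertices is correct but never used---the $11^+$-vertices that matter are the endpoints of $e_x$ and $e_y$.
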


\begin{proof}
By Propositions \ref{true-3-face} (applying it by choosing $\mathcal{F}$ as $\mathcal{G}_1^-$) and \ref{false-3-face}, every $2$-vertex $v$ in $\gx$ is not incident with a 3-face in $\gx$. By Proposition \ref{big-face}, the neighbors of $v$ in $\gx$, say $x$ and $y$, are both false vertices. 

If $v$ is a special 2-vertex, then $v$ receives 1 from its incident special 4-face, say $uxvy$, by R3. If the other face incident with $v$ in $\gx$ is still a 4-face, say $wxvy$, then there would be two edges in $G$ connecting $u$ to $w$, one passing through the crossing $x$ and the other passing through the crossing $y$. This contradicts the fact that $G$ is a simple graph. Therefore,  $v$ is incident with a $5^+$-face, from which $v$ receives another 1 by R4. Hence $c'(v)=2-4+1+1=0$.

So in the following, we assume that $v$ is a non-special 2-vertex. 

If $v$ is incident with a 4-face, say $uxvy$, then $d_{\gx}(u)\leq 10$ since $v$ is non-special. Let $u_1$ (resp.\,$u_2$) be the vertices in $G$ such that $uu_1$ (resp.\,$uu_2$) passes through the crossing $x$ (resp.\,$y$). Since $G$ is a simple graph, $u_1\neq u_2$, and moreover, $u_1$ and $u_2$ are $11^+$-vertices by Proposition \ref{edge-with-big-vertex}. Therefore, $v$ is incident with a $5^+$-face that satisfies the condition of Claim \ref{clm:5-plus-face-to-small-v}(1). Since such a face would send at least 2 to $v$ by Claim \ref{clm:5-plus-face-to-small-v}(1),  $c'(v)\geq 2-4+2=0.$


If $v$ is incident with two $5^+$-faces $f_1$ and $f_2$, then let $u_1u_2$ and $w_1w_2$ be edges of $G$ that pass through the crossings $x$ and $y$, respectively, such that $u_i$ and $w_i$ are vertices on $f_i$, where $i=1,2$. By Proposition \ref{edge-with-big-vertex}, there are at least two $11^+$-vertices among $u_1,u_2,w_1$ and $w_2$. Therefore, either $f_1$ or $f_2$ satisfies the condition of Claim \ref{clm:5-plus-face-to-small-v}(1), or both $f_1$ and $f_2$ satisfy the condition of Claim \ref{clm:5-plus-face-to-small-v}(2). In each case $v$ receives at least 2 from $f_1$ and $f_2$, and thus $c'(v)\geq 2-4+2=0$.
\end{proof}

\begin{clm}\label{clm:3-vertex-nonneg}
Every $3$-vertex in $\gx$ has a nonnegative final charge. 
\end{clm}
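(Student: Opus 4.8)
The plan is as follows. A $3$-vertex $v$ of $\gx$ starts with charge $3-4=-1$, and since $v$ is neither a $9^+$- nor an $11^+$-vertex it is never a donor under R1--R5; it can only receive charge, and only via R3 or R5. So the whole task is to find at least one unit of charge flowing into $v$.

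First I would fix the local picture around $v$. As no false vertex has degree $3$, $v$ is a true vertex, hence a vertex of $G$ with $d_G(v)=d_{\gx}(v)=3$. Propositions \ref{true-3-face} and \ref{false-3-face} (applied with $\mathcal{F}=\mathcal{G}_1^-$, using $3<9$) rule out any $3$-face incident with $v$, so every face at $v$ is a $4^+$-face; then Proposition \ref{big-face}, used with $v$ in the role of the low-degree vertex (legitimate since $d_G(v)=3\le 11-3=8$), shows that on each such face the two neighbors of $v$ are false. Going once around $v$ this forces all three $\gx$-neighbors of $v$ to be false vertices $x_p,x_q,x_r$, where $x_p$ is the crossing point on the $G$-edge $vp$, and similarly for $x_q,x_r$; by Proposition \ref{edge-with-big-vertex} the vertices $p,q,r$ are $11^+$-vertices, since each is the unique non-$v$ endpoint of an edge of $G$ meeting the $3$-vertex $v$.

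Then I would split into two cases. If $v$ lies on a special $4$-face $f$, then, having degree $3$, $v$ is neither the false nor the $11^+$-vertex of $f$, so $v$ is the special $3^-$-vertex of $f$ and R3 gives $v$ exactly $1$; hence $c'(v)\ge 0$. Otherwise $v$ lies on no special $4$-face. In that case pick $x_p$, the crossing of $vp$ with some edge $ab\in E(G)$; by Proposition \ref{edge-with-big-vertex} one endpoint, say $a$, is an $11^+$-vertex. The rotation at the false vertex $x_p$ is $v,a,p,b$ (the two crossing edges alternate), so among the two faces of $\gx$ meeting the edge $vx_p$ there is exactly one, call it $f^{\ast}$, on whose boundary $a$ immediately follows $x_p$ when one leaves $v$. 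The key observation is that $f^{\ast}$ cannot be a $4$-face: such a face would read $v\,x_p\,a\,x'$ with $x'$ another (false) neighbor of $v$, and then $a$ (of degree $\ge 11$) would be the vertex opposite the $3$-vertex $v$ with the remaining two vertices false --- that is, $f^{\ast}$ would be a special $4$-face through $v$, contrary to the present case. Hence $f^{\ast}$ is a $5^+$-face, and its five consecutive boundary vertices $a,x_p,v,x',w'$ meet the hypotheses of Claim \ref{clm:5-plus-face-to-small-v} ($a$ an $11^+$-vertex, $v$ a $3$-vertex, $x_p$ and $x'$ false); so by part (1) or (2) of that claim $f^{\ast}$ sends at least $1$ to $v$ through R5, and again $c'(v)\ge -1+1=0$.

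The step I expect to be the real obstacle is exactly this last observation: arranging the argument so that some $5^+$-face at $v$ is certified to carry an $11^+$-vertex two steps away from $v$ past a false vertex. Following the ``big'' endpoint of the crossing edge at $x_p$ and noting that the face it lies on would otherwise be special is what makes this clean, and it also painlessly disposes of the configuration in which all three faces at $v$ are non-special $4$-faces --- impossible, since then the crossing edge at $x_p$ would have both endpoints of degree $\le 10$, contradicting Proposition \ref{edge-with-big-vertex}. A minor point worth a sentence is degenerate incidences (a face meeting $v$ in two corners, or $f^{\ast}$ meeting $v$ more than once): these affect none of the inequalities above.
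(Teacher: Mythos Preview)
Your proof is correct and follows essentially the same approach as the paper. The only organizational difference is that the paper skips your preliminary split into ``some special $4$-face at $v$'' versus ``none'': it directly picks the crossing edge through one false neighbour, takes its $11^+$-endpoint, and then observes that the face on that side is either a special $4$-face (so R3 applies) or a $5^+$-face (so Claim~\ref{clm:5-plus-face-to-small-v} applies) --- which is exactly your argument with the two cases merged.
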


\begin{proof}
By Propositions \ref{true-3-face} and \ref{false-3-face}, every $3$-vertex $v$ in $\gx$ is not incident with a 3-face in $\gx$. By Proposition \ref{big-face}, the three neighbors of $v$ in $\gx$, say $x,y$ and $z$, are false vertices. Let $f_1,f_2$ and $f_3$ be the face that is incident with the path $xvy$, $yvz$ and $zvx$ in $\gx$. 

Let $x_1x_3$ be the edge of $G$ that passes through the crossings $x$, where $x_1\in V_{\gx}(f_1)$ and $x_3\in V_{\gx}(f_3)$. By Proposition \ref{edge-with-big-vertex}, either $x_1$ or $x_3$, say $x_1$, is a $11^+$-vertex. If $f_1$ is a $5^+$-face, then it satisfies the condition of Claim \ref{clm:5-plus-face-to-small-v}(1) or Claim \ref{clm:5-plus-face-to-small-v}(2). This implies that $v$ receives at least 1 from $f_1$, and thus $c'(v)\geq 3-4+1=0$. Hence we assume that $f_1$ is a 4-face. Actually, $f_1$ is a special $4$-face now, from which $v$ receives $1$ by R3. This implies that $c'(v)\geq 3-4+1=0$.  
\end{proof}

\begin{clm}\label{clm:no-adj-special-4-faces}
No two special $4$-faces sharing a common $11^+$-vertex are adjacent in $\gx$.
\end{clm}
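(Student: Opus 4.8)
The plan is to argue by contradiction: suppose $f_1$ and $f_2$ are two special $4$-faces, both incident with a common vertex $u$ of degree at least $11$, that share an edge, and derive a contradiction with Proposition~\ref{edge-with-big-vertex}. A preliminary observation is bookkeeping: in any special $4$-face $uxvy$ the two false vertices are $x,y$ (each of degree $4$) and the two true vertices are one of degree at least $11$ and one of degree in $\{2,3\}$; since $d_{\gx}(u)\ge 11$, the vertex $u$ must play the ``$11^+$-corner'' role in each of $f_1$ and $f_2$, so I may write $f_i=u x_i v_i y_i$ with $x_i,y_i$ false and $d_{\gx}(v_i)\in\{2,3\}$.

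The heart of the argument is to show that the shared edge $e$ is incident with $u$. Suppose not. Among the four edges of $f_1$, the two not at $u$ are $x_1v_1$ and $v_1y_1$, so reading $e$ off $f_1$ its endpoints are $v_1$ and a false vertex; reading $e$ off $f_2$ its endpoints are $v_2$ and a false vertex. Matching true with true and false with false gives $v_1=v_2=:v$ and a common false endpoint $w$, i.e.\ $e=vw$. But then the boundary walk of each $f_i$ runs $\cdots u\, w\, v\cdots$ (up to reversal), so each $f_i$ contains the edge $uw$ and, at the degree-$4$ false vertex $w$, occupies the corner between the two consecutive rotation edges $wu$ and $wv$. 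A corner at a vertex is incident with exactly one face, forcing $f_1=f_2$, a contradiction.

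Hence $e=uz$ for some false vertex $z$ incident with both faces; after relabelling the interchangeable false corners we may take $z=x_1=x_2$. Now $z$ is the crossing point of two edges of $G$, one of which carries the segment $uz$ and therefore has $u$ as an endpoint; let $qq^{\ast}\in E(G)$ be the other edge through $z$ (its two ends are distinct since $G$ is drawn with the minimum number of crossings, so adjacent edges do not cross). The two faces of $\gx$ incident with $uz$ are precisely the two corners of $z$ adjacent to $uz$ in the rotation at $z$, whose second edges are $zq$ and $zq^{\ast}$; comparing with $f_i=u\, z\, v_i\, y_i$ yields $\{v_1,v_2\}=\{q,q^{\ast}\}$. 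Thus $q$ and $q^{\ast}$ are the $3^-$-vertices $v_1,v_2$ of $f_1,f_2$, so $d_G(q)\le 3$ and $d_G(q^{\ast})\le 3$, while $qq^{\ast}$ is an edge of $G$ — contradicting Proposition~\ref{edge-with-big-vertex}, which (as $\ell=11$) states that every edge of $G$ has an endpoint of degree at least $11$.

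I expect the main obstacle to be the middle step, namely ruling out that $f_1$ and $f_2$ share an edge not incident with $u$: this needs a careful check of which of the four boundary edges can coincide, together with the planarity fact that two distinct faces cannot occupy the same corner at a vertex. Once the shared edge is pinned at $u$, translating the local crossing picture at $z$ into the statement ``the two small vertices of $f_1$ and $f_2$ are the two endpoints of a single edge of $G$'' and then invoking Proposition~\ref{edge-with-big-vertex} is routine.
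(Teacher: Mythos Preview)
Your proof is correct and follows the same core idea as the paper: once the shared edge is $uz$ with $z$ false, the two $3^-$-vertices $v_1,v_2$ turn out to be the two endpoints of the edge of $G$ crossing through $z$, contradicting Proposition~\ref{edge-with-big-vertex}. The paper's proof is terser---it simply writes the two faces as $f_1=vxuy$ and $f_2=vywz$ sharing the edge $vy$, then reads off $uw\in E(G)$---and does not explicitly argue that the shared edge must be incident with the common $11^+$-vertex; your middle paragraph supplies that justification via the corner argument at the shared false vertex, which is a genuine (if small) addition of rigor over the paper's version.
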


\begin{proof}
Suppose, to the contrary, $f_1=vxuy$ and $f_2=vywz$ are two adjacent special 4-faces in $\gx$ so that $d_{\gx}(v)\geq 11$. By the definition of the special $4$-face,  $u$ and $w$ are $3^-$-vertices and $y$ is a false vertex. This implies that $uw\in E(G)$, contradicting Proposition \ref{edge-with-big-vertex}.
\end{proof}

\begin{clm}\label{clm:giving-three-consecutive-faces}
If $v$ is a $11^+$-vertex and $f_1,f_2$ and $f_3$ are three consecutive faces that are incident with $v$ in $\gx$, then $v$ totally sends to $f_1, f_2$ and $f_3$ at most $2$.
\end{clm}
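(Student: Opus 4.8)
The plan is to enumerate, according to the possible types of the three faces $f_1,f_2,f_3$ incident with the $11^+$-vertex $v$, how much charge each of them can draw from $v$, and to check that the total never exceeds $2$. By the discharging rules, $v$ sends charge only via R1 (a $\tfrac13$ to each incident true $3$-face), R2 (a $\tfrac12$ to each incident false $3$-face — note $11\geq 9$), and R3 (a $1$ to each incident special $4$-face); the redistribution rules R4 and R5 concern only what a $5^+$-face sends \emph{out}, so they are irrelevant here. Thus the charge $v$ sends to a single incident face is $0$ (if it is a non-special $4$-face or a $5^+$-face), $\tfrac13$ (true $3$-face), $\tfrac12$ (false $3$-face), or $1$ (special $4$-face). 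In particular the total over any three faces is at most $3$, so the content of the claim is exactly that the extremal configurations — three $3$-faces, or two faces each taking $1$ — cannot occur around $v$.

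The first key step is to rule out three consecutive $3$-faces at $v$. Since no two false vertices are adjacent in $\gx$, among three consecutive $3$-faces at least one pair shares an edge of $\gx$ whose endpoints give a triangle structure forcing two false vertices to be adjacent, or else a true $3$-face is incident with $v$, which by Proposition~\ref{true-3-face} forces all its vertices to be $11^+$, and then one analyzes the small neighbourhood; more simply, two consecutive $3$-faces sharing an edge $vp$ with $p$ false would force the two ``outer'' vertices to be joined through two crossings, contradicting simplicity or the minimum-crossing choice of the drawing. The second key step is the case where some incident face is a special $4$-face taking $1$ from $v$: by Claim~\ref{clm:no-adj-special-4-faces}, two special $4$-faces incident with $v$ cannot be adjacent, so among $f_1,f_2,f_3$ at most one — unless $f_1$ and $f_3$ are both special, in which case they are not adjacent to each other but both adjacent to $f_2$; then $f_2$ shares with each of $f_1,f_3$ an edge of $\gx$ incident with $v$, and since each special $4$-face has the pattern (true $11^+$)–(false)–($3^-$)–(false), one checks that $f_2$ would have to be incident with two false vertices consecutive along $v$, which is impossible, or that a $3^-$-vertex ends up adjacent to another $3^-$-vertex, contradicting Proposition~\ref{edge-with-big-vertex}. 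Hence at most one of $f_1,f_2,f_3$ is special, contributing at most $1$, and the remaining two contribute at most $\tfrac12$ each, for a total of at most $1+\tfrac12+\tfrac12=2$.

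It remains to handle the case with no special $4$-face among $f_1,f_2,f_3$: then each face contributes at most $\tfrac12$, so the total is at most $\tfrac32<2$, unless we must also exclude the possibility that all three are $3$-faces contributing $\tfrac13+\tfrac13+\tfrac13=1$ or $\tfrac12+\tfrac12+\tfrac12=\tfrac32$ — both already below $2$. Thus the only genuinely dangerous configurations are ``one special $4$-face plus two false $3$-faces'' ($=2$, allowed, tight) and ``two special $4$-faces'' ($=2$ would need adjacency patterns ruled out above, or $>2$), and the argument reduces to the two structural exclusions. I expect the main obstacle to be the careful case analysis in the second step: correctly accounting for the cyclic adjacency of $f_1,f_2,f_3$ around $v$, using that consecutive faces share an edge $vp$ with $p$ a false vertex, and translating each forbidden local pattern into a contradiction with either simplicity of $G$, the minimum-crossing choice, Proposition~\ref{edge-with-big-vertex}, or Claim~\ref{clm:no-adj-special-4-faces}; drawing the few pictures to see which false/true and $3^-$/$11^+$ labels are forced is where the real work lies, while the arithmetic is trivial once the configurations are pinned down.
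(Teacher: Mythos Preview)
Your proposal contains a genuine error in the ``second key step.'' You correctly observe that Claim~\ref{clm:no-adj-special-4-faces} forbids two \emph{adjacent} special $4$-faces at $v$, and you correctly identify the remaining possibility that $f_1$ and $f_3$ are both special while $f_2$ separates them. But you then attempt to derive a contradiction from this configuration (``$f_2$ would have to be incident with two false vertices consecutive along $v$, which is impossible, or \ldots''), concluding that at most one of $f_1,f_2,f_3$ is special. That conclusion is false: nothing prevents $f_1$ and $f_3$ from both being special $4$-faces at $v$. Having two false vertices as the two neighbours of $v$ on the boundary of $f_2$ is not a contradiction, and no $3^-$--$3^-$ adjacency is forced either, since the $3^-$-vertex of $f_1$ and the $3^-$-vertex of $f_3$ lie on the far side of those faces from $f_2$ and need not be adjacent to each other.

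The correct way to handle this case (and this is what the paper does) is not to exclude it but to observe that it still gives total at most $2$: the two edges of $f_2$ at $v$ go to false vertices, so $f_2$ cannot be a $3$-face (a $3$-face $vp_1p_2$ would require the false vertices $p_1,p_2$ to be adjacent), and by Claim~\ref{clm:no-adj-special-4-faces} $f_2$ is not a special $4$-face either; hence $f_2$ is a non-special $4^+$-face and receives $0$ from $v$, giving $1+0+1=2$. With this fix the proof becomes very short: if at most one $f_i$ is special the bound is $1+\tfrac12+\tfrac12=2$, and if two are special they must be $f_1,f_3$ with $f_2$ receiving $0$. Your ``first key step'' about ruling out three consecutive $3$-faces is unnecessary, as you yourself note later: three $3$-faces would contribute at most $\tfrac32$.
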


\begin{proof}
If there is only one special 4-face among $f_1,f_2$ and $f_3$, then by R1--R3, $v$ totally sends to $f_1, f_2$ and $f_3$ at most $1+\frac{1}{2}+\frac{1}{2}=2$. If there are at least two special 4-faces among $f_1,f_2$ and $f_3$, then by Claim \ref{clm:no-adj-special-4-faces}, they are $f_1$ and $f_3$, and $f_2$ is a non-special $4^+$-face. In this case  
$v$ totally sends $1+0+1=2$ to $f_1, f_2$ and $f_3$ by R3.
\end{proof}

\begin{clm}\label{clm:4-plus-vertex-nonneg}
Every $4^+$-vertex in $\gx$ has a nonnegative final charge. 
\end{clm}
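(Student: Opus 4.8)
The plan is to verify $c'(v)\ge 0$ directly by cases on $d:=d_{\gx}(v)\ge 4$. First note that no rule ever sends charge \emph{into} a $4^+$-vertex (R4 and R5 feed only $2$- and $3$-vertices), so since $c(v)=d-4\ge 0$ it suffices to bound the charge leaving $v$. If $d\le 8$, then $v$ is neither a $9^+$-vertex nor an $11^+$-vertex, so none of R1--R3 applies and $v$ keeps all its charge; in particular a false vertex (always a $4$-vertex) ends with $c'(v)=0$. If $d\in\{9,10\}$, the only rule that removes charge from $v$ is R2, which costs $\frac12$ for each of the at most $d$ faces at $v$, so $c'(v)\ge (d-4)-\frac{d}{2}=\frac{d-8}{2}\ge 0$.

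The real work is the case $d\ge 11$. For a face $f$ incident with $v$ write $\tau(f)$ for the charge $v$ sends to $f$; by R1--R3 we have $\tau(f)=1$ iff $f$ is a special $4$-face, $\tau(f)=\frac12$ iff $f$ is a false $3$-face, $\tau(f)=\frac13$ iff $f$ is a true $3$-face, and $\tau(f)=0$ otherwise, so $1-\tau(f)\ge\frac12$ whenever $f$ is not a special $4$-face. The target $\sum_{f\ni v}\tau(f)\le d-4$ is equivalent to $\sum_{f\ni v}(1-\tau(f))\ge 4$. List the faces around $v$ cyclically as $f_1,\dots,f_d$ and let $k$ be the number of special $4$-faces among them. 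If $k=0$ we are done since $\sum_i(1-\tau(f_i))\ge\frac{d}{2}\ge\frac{11}{2}>4$. If $k\ge 1$, invoke Claim~\ref{clm:no-adj-special-4-faces}: the special $4$-faces at $v$ are pairwise nonadjacent, hence they split the remaining $d-k$ faces into $k$ nonempty cyclic blocks $B_1,\dots,B_k$. I claim each block $B_i$ contributes at least $1$ to $\sum(1-\tau)$. If $|B_i|\ge 2$ this is clear, since every face contributes at least $\frac12$; if $B_i=\{f\}$ is a single face, it lies between two special $4$-faces $S,S'$, and applying Claim~\ref{clm:giving-three-consecutive-faces} to the three consecutive faces $S,f,S'$ gives $1+\tau(f)+1\le 2$, forcing $\tau(f)=0$ and $1-\tau(f)=1$. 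Summing over the blocks gives $\sum_{f\ni v}(1-\tau(f))\ge k$, while counting each non-special face separately gives $\sum_{f\ni v}(1-\tau(f))\ge\frac12(d-k)$. If $k\le 3$ the second bound yields $\ge\frac12(11-3)=4$; if $k\ge 4$ the first yields $\ge 4$. Either way $c'(v)\ge 0$, finishing the claim.

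The step I expect to be the main obstacle is exactly this last case, and specifically $d=11$: Claim~\ref{clm:giving-three-consecutive-faces} on its own only yields $\sum\tau\le\frac{2d}{3}$ after averaging over disjoint triples of consecutive faces, and $\frac{2\cdot 11}{3}>11-4=7$, so it is not enough by itself. The fix is to combine it with Claim~\ref{clm:no-adj-special-4-faces} (the special $4$-faces form an independent set of the rotation at $v$) together with the sharpened local observation that a gap consisting of a \emph{single} face between two special $4$-faces must receive $0$; this is what lets one play the two lower bounds $k$ and $\frac12(d-k)$ against each other. One should also check that the cyclic bookkeeping is legitimate: every $3$-face and every special $4$-face meets $v$ in exactly one corner, so no face is ever charged by $v$ more than once, and the argument needs no $2$-connectivity hypothesis on $G$.
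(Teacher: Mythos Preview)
Your proof is correct. For $4\le d\le 10$ it coincides with the paper's argument verbatim. For $d\ge 11$ you take a genuinely different and somewhat cleaner route. The paper uses the overlapping-triple average $\sum_i\omega_i=3\sum_i\alpha_i\le 2d$ (via Claim~\ref{clm:giving-three-consecutive-faces}) to get $\sum_i\alpha_i\le\frac{2d}{3}$, which settles $d\ge 12$ directly but leaves $d=11$ as a separate case; there the paper locates two special $4$-faces at distance~$2$ in the rotation, sharpens two of the $\omega_i$ to $\frac32$, and reruns the average. You instead rewrite the goal as $\sum_f(1-\tau(f))\ge 4$ and exploit the block structure of the rotation between special $4$-faces (using Claim~\ref{clm:no-adj-special-4-faces} for nonemptiness and Claim~\ref{clm:giving-three-consecutive-faces} to force $\tau=0$ on a singleton block). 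This yields the two bounds $\sum(1-\tau)\ge k$ and $\sum(1-\tau)\ge\frac12(d-k)$, which together cover all $d\ge 11$ uniformly without singling out $d=11$. Both arguments rest on the same two auxiliary claims; yours trades the averaging machinery for a direct combinatorial count, and in return avoids the extra case analysis. One cosmetic remark: in your closing commentary the phrase ``averaging over disjoint triples'' should read ``overlapping triples'' (that is how $\sum\omega_i=3\sum\alpha_i$), but this is outside the proof proper.
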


\begin{proof}
Since vertices of degree between 4 and 8 are not involved in the discharging rules, their final charges are the same with their initial charges, which are nonnegative. Suppose that $v$ is a vertex of degree $d\geq 9$.

If $9\leq d\leq 10$, then $c'(v)\geq d-4-\frac{1}{2}d>0$ by R2.

If $d\geq 11$, then let $f_1,f_2,\ldots,f_d$ be the faces in this order around $v$. Let $\alpha_i$ with $1\leq i\leq d$ be the charge that $v$ sends to $f_i$ and let $\omega_i=\alpha_i+\alpha_{i+1}+\alpha_{i+2}$, where the subscripts are taken modular $d$. One can see that
$$\sum_{i=1}^d \alpha_i=\frac{1}{3} \sum_{i=1}^d \omega_i\leq \frac{2}{3}d,$$
where the second inequality holds by Claim \ref{clm:giving-three-consecutive-faces}. 
Hence $c'(v)=d-4-\sum_{i=1}^d \alpha_i\geq \frac{1}{3}d-4\geq 0$ if $d\geq 12$.

We now consider the case when $d=11$ more carefully. If $v$ is incident with at most three special 4-faces, then $c'(v)\geq 11-4-3\times 1-8\times \frac{1}{2}=0$ by R1--R3. So we assume that $v$ is incident with at least four special 4-faces. This implies that there is an integer
$1\leq i\leq d$ such that $f_i$ and $f_{i+2}$ are special 4-faces, where the subscripts are taken modular $d$. Assume, without loss of generality, that $i=1$. In this case, $f_2$ shall be a non-special $4^+$-face and therefore $\alpha_2=0$. By Claim \ref{clm:no-adj-special-4-faces}, $f_d$ and $f_4$ cannot be special 4-faces, to each of which $v$ sends at most $\frac{1}{2}$ by R1 and R2. This implies that $\omega_{11}\leq \frac{1}{2}+1+0=\frac{3}{2}$ and $\omega_2\leq 0+1+\frac{1}{2}=\frac{3}{2}$. Hence by Claim \ref{clm:giving-three-consecutive-faces}, we conclude that
$$\sum_{i=1}^{11} \alpha_i=\frac{1}{3} \bigg(\omega_2+\omega_{11}+\sum_{i\leq 10, i\neq 2} \omega_i\bigg)
\leq \frac{1}{3}\times \bigg(\frac{3}{2}+\frac{3}{2}+2\times 9\bigg)=7.$$
This implies that $c'(v)=11-4-\sum_{i=1}^{11} \alpha_i\geq 11-4-7=0$.
\end{proof}

\section{Remarks and Open Problems}

In this paper we have proved
\[
 7\leq \chi(\mathcal{G}_2)\leq \chi^d(\mathcal{G}_1)\leq \ch^d(\mathcal{G}_1)\leq 11.~~~~~~(\star)
\]
Hence a natural problem is to close the gap between the lower and the upper bounds in $(\star)$. In other words, we propose

\begin{pblm}\label{prob:1}
Determine the minimum integers $\ell_1$ and $\ell_2$ so that every 1-planar graph is dynamically $\ell_1$-colorable and  dynamically $\ell_2$-choosable, respectively.
\end{pblm}

On the other hand, one can see that the first relationship between the proper coloring of 2-planar graphs and the dynamic coloring of 1-planar graphs has been established  by Fact \ref{fact-1}. Actually, if we have a better lower bound for $\chi(\mathcal{G}_2)$, then we can improve 7 in $(\star)$ immediately. We think this may be a good motivation to study the proper coloring of 2-planar graphs. In view of this, we pose the following 

\begin{pblm}\label{prob:2}
Does there exist $2$-planar graph with chromatic number $8$ or $9$  ?
\end{pblm}

Note that $K_8$ is not 2-planar, which was very recently proved by Angelini, Bekos, Kaufmann and Schneck \cite{angelini2019efficient}. 
On the other hand, Dmitry Karpov (personal communication) announced a proof of 9-colorability of 2-planar graphs (written in Russian). 

\section*{Acknowledgements}

The first author would like to thank Fedor Petrov who reminded him at MathOverflow \cite{ZHANG} on an unpublished result of Dmitry Karpov that $\chi(\mathcal{G}_2)\leq 9$, and appreciate Dmitry Karpov for the personal communication with him on this topic. The supports provided by China Scholarship Council (CSC) and Institute for Basic Science (IBS, Korea) during a visit of the first author to Discrete Mathematics Group, IBS are acknowledged.

\bibliographystyle{abbrv}
\bibliography{dynamic-coloring,1-planar-graphs,else}

\end{document}